\newtheoremstyle{style}
  {0.5cm}                 
  {0.5cm}                 
  {\itshape}                         
  {}                         
  {\normalfont\bfseries}  
  {\normalfont {\bf : }} 
  {\newline}
  {}
\theoremstyle{style}
\newtheorem{theorem}{Theorem}[section]
\newtheorem{definition}{Definition}[section]
\newtheorem{lemma}{Lemma}[section]
\newtheorem{corollary}{Corollary}[section]
\newtheorem{prop}{Proposition}[section]
\newtheorem{assumption}{Assumption}[section]
\newtheorem{remark}{Remark}[section]
\newcommand{\R}{\mathbb{R}}
\begin{document}

\title{The Nonlocal Ramsey Model for an Interacting Economy}
\author[*]{L. Frerick}
\author[**]{G. M{\"u}ller-F{\"u}rstenberger}
\author[*]{E. W. Sachs}
\author[*]{L. Somorowsky}

\affil[*]{Department of Mathematics, University of Trier, Germany}
\affil[**]{Department of Economics, University of Trier, Germany}

\maketitle

\section{Introduction}

The Ramsey model, first introduced by F. Ramsey in 1928, has become a cornerstone in the economic growth theory. Combined with the Solow model in the 1950s by the economists \cite{cass} and \cite{koopmans}, it is still one of the most used neoclassical growth models. Due to its general and universal structure, it is widely used in various applications. Thus, many versions, including the effects of taxation, government spendings and population growth exist (see amongs others \cite{sorger}, \cite{barro90}, and \cite{acemoglu}).\\
The central idea in the Ramsey model is the endogenous saving rate, which is determined within the optimization process in the model via a lifetime utility maximization approach in the consuming sector. Though originally only time dependent, the Ramsey model has been spatialized in the last two decades in the context of the Economic Georgraphy, started by \cite{krugman1}. The first who introduced a spatial version of the Ramsey model was \cite{brito01}. This local version of the spatial Ramsey model assumes that the capital accumulation process in time and space can be modeled as a heat equation, with a local diffusion operator to describe the  mobility of capital across space. This assumption has not been contested until today. 
%
%
%
%
%

Economic geography depends not only on space, but also on the spreading behavior of production factors between several disjoint economies. In times of globalization and international trade agreements, it is also important in geographic economics to consider cross-border dependencies of production factors and economic welfare. By restricting the spatial domain of interest to a bounded domain $\Omega\subset\R^n$, we have naturally defined a border of an economy. Such a bounded economy could be seen as a country or a trade association. In contrast to the previous chapter, where we considered an unbounded spatial domain, we now have to introduce some boundary conditions in order to make the Ramsey problem well defined. As already mentioned in the introduction to the nonlocal spatial Ramsey model in Section \ref{NSRMWEPG}, these boundary constraints do not only act on the surface of the domain $\Omega$, but on a non-zero volume, the so called interaction, domain $\Omega_{\mathcal{I}}$. Whenever we understand the domain $\Omega$ as a bounded economy, $\Omega_{\mathcal{I}}$ can be interpreted as a trade-off set of production factors. By considering homogeneous Neumann-type volume constraints, we assume that the exchange of production factors between $\Omega$ and $\Omega_{\mathcal{I}}$ is balanced. We only allow capital and labor to leave the economy $\Omega$ and be replaced by the production factors in $\Omega_{\mathcal{I}}$. Moreover, we assume that neither production goods leave the economy nor consumption goods can be traded in the interaction set and that there is no production in $\Omega_{\mathcal{I}}$, hence $A_0=0$ on $\Omega_{\mathcal{I}}$. \\
We are not the first to restrict the spatial domain to a bounded set. For example, \cite{boucekkine13} define the spatial domain as the unit ball and \cite{aldashev}, following \cite{boucekkine13}, consider the parameterized circle as the interval $[0,2\pi]$. They do not define any boundary conditions but interpret the unit circle as the global economy. \cite{brock} and \cite{brock2013} consider a nonlocal model on an (arbitrary) compact interval. In their model, they do not consider any diffusion effects of the state variable but only time dependent spillover effects and so do not need to define any boundary conditions. A model close to our setting is described in \cite{anita}. Here, the authors consider a bounded space domain and introduce homogeneous Neumann boundary conditions. In the optimal control problem, they restrict the time line to a finite time horizon. Although their model is nonlocal as well, the quality of the nonlocality is different. Instead of considering nonlocal diffusion effects to model the mobility of capital, they include a pollution function in the capital accumulation equation. This pollution function is modeled as a partial differential equation with an integral term depending only on the capital function as right-hand side. However, the works listed above show that considering a bounded spatial domain and a finite time horizon is convenient for the economic application.\\

\section{A Nonlocal Vector Calculus}\label{NVC}

The fundamental theorem of calculus combines the concepts of the differential calculus and the integral calculus (cf. \citealp[p.304]{elstrodt}). Du, Gunzburger, Lehoucq, and Zhou introduce a new, nonlocal vector calculus. This theory aims at defining an analogon to the well known vector calculus for differential equations. In their papers, \citet{du1,du_ana,du} and \cite{gunzburger} derive a notion of nonlocal divergence and gradient operators and some fundamental relationships between the nonlocal operators and their derivatives. They are able to mimic the classical differential calculus to the framework of nonlocal operators and they prove identities like the Gaussian theorem or Green's identities for nonlocal diffusion equations. In that way, they make it possible to use the techniques of the analysis of common differential equations in the context of partial integro-differential equations.\\

In this section, we give a short introduction to the nonlocal vector calculus developed by \cite{du_ana,du1} and \citet{gunzburger}. We follow the notation in \cite{du_ana,du1} and \cite{delia2}, and denote $\Omega\subset\R^n$ as an open and bounded domain with sufficiently smooth boundary. Throughout this chapter, we assume $\Omega$ to be a Lipschitz domain. \\
The nonlocal vector calculus exploits the form of the nonlocal divergence and gradient operators. It is crucial to understand the {\em nonlocal diffusion operator}, commonly defined for a function $u:\R^n\to\R$ as
\begin{equation}\label{nonlocaldiff}
\mathcal{NL} u (x) := \int_{\R^n}(u(y)-u(x))\gamma(x,y) dy \qquad\mbox{ for } x\in\Omega,
\end{equation}
where $\Omega$ has nonzero volume and $\gamma$ denotes a nonnegative and symmetric kernel function, as a composition of nonlocal divergence and gradient operators, analogously to the local case.\\
Consider the two vector mappings $\nu,\ \alpha:\ \R^n\times\R^n\to\R^m$, with $\alpha$ antisymmetric. The {\em nonlocal divergence operator} $\mathcal{D}$ on $\nu$, $\mathcal{D}(\nu):\R^n\to\R$ is then defined as
\begin{equation}\label{divergence}
\mathcal{D}(\nu)(x):=\int_{\R^n} (\nu(x,y)+\nu(y,x))^T\alpha(x,y)dy \qquad \mbox{ for } x\in\R^n
\end{equation}
(c. \citealp[p.10]{du1}).\\
For a given mapping $u:\R^n\to\R$, \citet{du1} derive the the adjoint operator $\mathcal{D}^*$ corresponding to $\mathcal{D}$ with respect to the standard $L^2$ duality pairing as 
\begin{equation}\label{gradient}
\mathcal{D}^* (u)(x,y) = -(u(y)-u(x))\alpha(x,y)\qquad \mbox{ for } x,y\in\R^n.
\end{equation} 
The function $\mathcal{D}^*(u)$ maps from $\R^n\times\R^n$ to $\R^m$. The nonlocal adjoint operator $-\mathcal{D}^*$ can be interpreted as {\em nonlocal gradient operator}. Now, if the kernel function $\gamma$ in (\ref{nonlocaldiff}) is given as
\[\gamma = \alpha^T(\alpha),\]
the nonlocal diffusion operator in $(\ref{nonlocaldiff})$ can be represented as 
\[\mathcal{NL}(u) = -\frac{1}{2}\mathcal{D}(\mathcal{D}^*u).\]
Obviously $\mathcal{NL}$ is nonlocal since the evaluation of the operator in a point $x$ makes use of the evaluations of the function $u$ in points $\R^n \ni y\neq x$.

Due to this nonlocal character of $\mathcal{NL}$, it is not sufficient to consider boundary conditions that only act on the boundary $\partial \Omega$  of the set of interest, which is only a surface in $\R^n$. Instead, we have to introduce so called {\em volume constraints} which act on an {\em interaction domain} with nonzero volume. This interaction domain, denoted by $\Omega_{\mathcal{I}}\subset \R^n$, is the natural nonlocal extension of the surface-boundary of $\Omega$. Throughout we require that $\Omega_{\mathcal{I}} \cap \Omega=\emptyset$. \citet{du1} define the interaction domain as the set of all points in $\R^n\backslash \Omega$ that interact with points in $\Omega$, thus
\[\Omega_{\mathcal{I}} :=\{y\in\R^n\backslash \Omega:\ \alpha(x,y)\neq 0 \mbox{ for some } x\in\Omega\}.\]
Note that there is no assumption made about the geometric relation between the two sets $\Omega$ and $\Omega_{\mathcal{I}}$, however we restrict the definition of the interaction domain in our application to the case
\[\Omega_{\mathcal{I}} :=\{y\in\R^n\backslash \Omega:\ \|x-y\|_2\le \varepsilon,\  x\in\Omega\}.\]
The interaction of points in the domain of interest $\Omega$ with points in the interaction domain $\Omega_{\mathcal{I}}$ is modeled by a so called {\em nonlocal interaction operator} $\mathcal{V}$, an analogon to the local flux operator $\partial u/ \partial \overrightarrow{n}$. For a function $\nu:\R^n\times \R^n\to\R^m$, it is defined as
\[\mathcal{V}(\nu)(x) := -\int_{\Omega\cup\Omega_{\mathcal{I}}} (\nu(x,y)+\nu(y,x))^T\alpha(x,y)dy,\]
where $x$ is an element in $\Omega_{\mathcal{I}}.$ This operator can be interpreted as nonlocal flux from $\Omega$ into $\Omega_{\mathcal{I}}$.      \\
Note that in this setting, \citet{du1} provide a nonlocal version of the Gauss theorem, the integration by parts formula and the Green's identities. For further detail, we refer to \citet[Chapter 4]{du1}.

Not only does the nonlocal vector calculus theory provide some tools to analyze differential equations with nonlocal diffusion, the construction of the nonlocal diffusion operator as the composition of the nonlocal gradient and  divergence operator leads to the definition of a function space that is - under some circumstances - equivalent to the volume-constraint space of quadratic Lebesgue integrable functions 
\[L^2_c(\Omega\cup\Omega_{\mathcal{I}}) := \{u\in L^2(\Omega\cup\Omega_{\mathcal{I}}):\ E_c(u;0)=0\},\]
where the constraint functional $E_c$ is defined below in (\ref{Econstraint}). As appropriate function space for the (weak) solutions of the nonlocal (differential) equations, we consider the so called  {\em volume constrained nonlocal energy space,} which is given in the following definition:

\begin{definition}\label{vces}
The nonlocal energy space is defined by \citet{du_ana} as
\[V(\Omega\cup\Omega_{\mathcal{I}}) : = \{u\in L^2(\Omega\cup\Omega_{\mathcal{I}}):|||u|||<\infty\},\] 
where the {\em nonlocal energy norm} is given by
\[|||u|||:=\left(  \frac{1}{2}\int_{\Omega\cup\Omega_{\mathcal{I}}}\int_{\Omega\cup\Omega_{\mathcal{I}}}\mathcal{D}^*(u)(x,y)^T(\mathcal{D}^*(u)(x,y))\ dydx\right)^{\frac{1}{2}}.\]
Dealing with volume constraints, which are natural nonlocal expansions of the local boundary constraints, the {\em nonlocal volume-constrained energy space} is defined as
\begin{equation}\label{energyspace}
V_c(\Omega\cup\Omega_{\mathcal{I}}):=\{u\in V(\Omega\cup\Omega_{\mathcal{I}}):\ E_c(u;0)=0 \}, 
\end{equation}
where $E_c$ denotes the constraint functional, which depends on the volume constraint. \\
\end{definition}

\citet{du_ana} introduce two types of volume constraints, Dirichlet and Neumann type. 
In this paper, we restrict the consideration to the case of homogeneous Dirichlet constraints. This is due to application. We assume that the central planner follows a strategy and set the capital stock in the interaction domain to zero. In this way, he ...\\

Let $\Omega_{\mathcal{I}_d}$ denote the Dirichlet interaction set. The Dirichlet volume constraints are defined analogously to the local case as
\[u = g_d \mbox{ on } \Omega_{\mathcal{I}_d}.\] 
In this case, the constraint functional $E_c$, which defines the solution space, is given as
\begin{equation}\label{Econstraint}
E_c(u;g) = E_c(u;g_d) = g_d-\int_{\Omega_{\mathcal{I}_d}} u^2 \ dx
\end{equation}
for the given data $g_d$. This condition characterizes the solution space as defined in Definition \ref{vces} and ensures the existence of a unique solution \citep[pp.679]{du_ana}.\\

We have a closer look at the circumstances under which the nonlocal constrained energy space is equivalent to the $L^2_c$. According to \citet{delia2}, the kernel function $\gamma$ in (\ref{nonlocaldiff}) or (\ref{volume}) has to have the following properties:

\begin{prop}\label{prokernel}
For $x\in\Omega$, let $B_{\varepsilon}(x):=\{y\in \R^n:\ \|y-x\|_2\le \varepsilon\}$ be the $n$-dimensional ball with a given radius $\varepsilon>0$. Let the kernel function $\gamma$ satisfy the following properties:
\begin{enumerate}
\item $\gamma(x,y)\ge 0$ for all $ y\in\mathcal{B}_{\varepsilon}(x)$.
\item $\gamma(x,y)\ge\gamma_0> 0$ for all $ y\in\mathcal{B}_{\varepsilon/2}(x)$.
\item $\gamma(x,y) = 0$ for all $ y\in (\Omega\cup\Omega_{\mathcal{I}})\backslash \mathcal{B}_{\varepsilon}(x)$.
\item There exists a constant $\gamma_1>0$ such that
\[\gamma_1 \le \int_{(\Omega\cup\Omega_{\mathcal{I}})\cap\mathcal{B}_{\varepsilon}(x)}\gamma(x,y)\ dy \qquad \forall x\in\Omega.\]
\item There exists a constant $\gamma_2>0$ such that
\[ \qquad \int_{\Omega\cup\Omega_{\mathcal{I}}}\gamma^2(x,y)\ dy \le \gamma_2^2\qquad \forall x\in\Omega.\]
\end{enumerate}
Then, the nonlocal volume-constrained energy space $V_c(\Omega\cup\Omega_{\mathcal{I}})$ is equivalent to the volume constrained Lebesgue space,
\[L^2_c(\Omega\cup\Omega_{\mathcal{I}}):=\{u\in L^2(\Omega\cup\Omega_{\mathcal{I}}):\ E_c(u;0)=0\}.\]
Hence, there exist some constants $C_1$ and $C_2$, both positive, such that
\begin{equation}\label{norm_equivalnece}
C_1\|u\|_{L^2(\Omega\cup\Omega_{\mathcal{I}})} \le |||u||| \le C_2\|u\|_{L^2(\Omega\cup\Omega_{\mathcal{I}})}\qquad \forall u\in V_c(\Omega\cup\Omega_{\mathcal{I}}).
\end{equation}
Moreover, $V_c(\Omega\cup\Omega_{\mathcal{I}})$ endowed with the norm $|||\cdot|||$ is a Hilbert space. 
\end{prop}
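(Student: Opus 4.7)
The plan is to prove the norm equivalence (\ref{norm_equivalnece}) first; from it, the set equality $V_c=L^2_c$ and the Hilbert space property follow with little additional work. Using $\mathcal{D}^*(u)(x,y)=-(u(y)-u(x))\alpha(x,y)$ together with $\gamma=\alpha^T\alpha$, I would first rewrite
\[|||u|||^2 = \frac{1}{2}\int_{\Omega\cup\Omega_{\mathcal{I}}}\int_{\Omega\cup\Omega_{\mathcal{I}}}(u(y)-u(x))^2\,\gamma(x,y)\,dy\,dx,\]
so that the associated symmetric bilinear form
\[\langle u,v\rangle := \frac{1}{2}\int_{\Omega\cup\Omega_{\mathcal{I}}}\int_{\Omega\cup\Omega_{\mathcal{I}}} (u(y)-u(x))(v(y)-v(x))\,\gamma(x,y)\,dy\,dx\]
is a candidate inner product on $V_c(\Omega\cup\Omega_{\mathcal{I}})$ that induces $|||\cdot|||$.

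For the upper bound, I would apply $(u(y)-u(x))^2\le 2u(x)^2+2u(y)^2$, use the symmetry of $\gamma$, and invoke Fubini to reduce the estimate to
\[|||u|||^2 \le 2\int_{\Omega\cup\Omega_{\mathcal{I}}} u(x)^2 \left(\int_{\Omega\cup\Omega_{\mathcal{I}}}\gamma(x,y)\,dy\right)dx.\]
Property~3 confines the inner integral to $\mathcal{B}_{\varepsilon}(x)$, so Cauchy--Schwarz together with property~5 gives $\int\gamma(x,y)\,dy\le \gamma_2\sqrt{|\mathcal{B}_{\varepsilon}|}$ uniformly in $x\in\Omega$; since $u$ vanishes on $\Omega_{\mathcal{I}_d}$ by the Dirichlet constraint defined by (\ref{Econstraint}), the contribution from $x\in\Omega_{\mathcal{I}}$ is controlled as well, yielding $|||u|||\le C_2\|u\|_{L^2(\Omega\cup\Omega_{\mathcal{I}})}$.

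The lower (Poincar\'e-type) bound $C_1\|u\|_{L^2(\Omega\cup\Omega_{\mathcal{I}})}\le|||u|||$ is the main obstacle, and here the Dirichlet constraint is essential. The plan has two steps. First, for $x\in\Omega$ within distance $\varepsilon/2$ of $\Omega_{\mathcal{I}}$, pick any $y\in\mathcal{B}_{\varepsilon/2}(x)\cap\Omega_{\mathcal{I}}$, where $u(y)=0$, and use $u(x)^2=(u(x)-u(y))^2$; integrating in $y$ against $\gamma(x,y)$ and invoking property~2 delivers a local pointwise control of $u(x)^2$ by the energy density. Second, for points $x$ farther inside $\Omega$, I would propagate this control by iterating along a finite chain of balls of radius $\varepsilon/2$ joining $x$ to the boundary layer, using property~2 in each step and estimating the jump $(u(x_{k+1})-u(x_k))^2$ by the integral of $(u(y)-u(z))^2\gamma(y,z)$ on $\mathcal{B}_{\varepsilon/2}(x_k)\cap\mathcal{B}_{\varepsilon/2}(x_{k+1})$, which is non-degenerate by property~4. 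Should the chain argument become delicate because of the geometry of $\Omega$, I would fall back on a compactness argument: assume $\|u_n\|_{L^2}=1$ and $|||u_n|||\to 0$, exploit property~4 to obtain $L^2$-precompactness of $(u_n)$, pass to a limit $u_\infty$ with vanishing energy, and combine properties~1--2 with $u_\infty|_{\Omega_{\mathcal{I}}}=0$ to conclude $u_\infty\equiv 0$, contradicting $\|u_\infty\|_{L^2}=1$.

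Once (\ref{norm_equivalnece}) is established, the set equality $V_c=L^2_c$ is immediate: the upper bound gives $L^2_c\subset V_c$, while the reverse inclusion is automatic from the definition of $V_c$. The bilinear form $\langle\cdot,\cdot\rangle$ is symmetric and, by the lower Poincar\'e bound, strictly positive on $V_c$, so it is an inner product inducing $|||\cdot|||$. For completeness, a $|||\cdot|||$-Cauchy sequence $(u_n)\subset V_c$ is Cauchy in $L^2(\Omega\cup\Omega_{\mathcal{I}})$ by the lower bound and hence converges to some $u\in L^2$; the Dirichlet constraint $E_c(\cdot;0)=0$ is preserved in the $L^2$-limit, so $u\in L^2_c=V_c$, and the upper bound yields $|||u_n-u|||\to 0$. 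This shows that $(V_c(\Omega\cup\Omega_{\mathcal{I}}),|||\cdot|||)$ is a Hilbert space.
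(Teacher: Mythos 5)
The paper does not actually prove this proposition: it states it and refers to Du, Gunzburger, Lehoucq and Zhou for the proof. Your primary argument is, in substance, a reconstruction of that cited proof, and it is sound. The upper bound via $(u(y)-u(x))^2\le 2u(x)^2+2u(y)^2$, the symmetry of $\gamma$, Fubini, and properties 3 and 5 (with the $x\in\Omega_{\mathcal{I}}$ contribution killed by the Dirichlet constraint $u=0$ a.e.\ on $\Omega_{\mathcal{I}}$, which is what $E_c(u;0)=0$ means here) is exactly right; the Poincar\'e bound by anchoring $u$ at the interaction domain and propagating control inward through overlapping balls of radius $\varepsilon/2$ is the standard covering/layering argument and is the crux of the matter. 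Two small corrections to the chaining step: what makes each step non-degenerate is property 2 (the uniform lower bound $\gamma\ge\gamma_0$ on $B_{\varepsilon/2}(x)$) together with a uniform lower bound on the measure of $B_{\varepsilon/2}(x)\cap\Omega_{\mathcal{I}}$, respectively of the overlap with the previous layer, which comes from the cone condition on $\Omega$ and $\Omega_{\mathcal{I}}$ --- not from property 4. Your deductions of the set equality $V_c=L^2_c$, of the inner-product structure, and of completeness from the two-sided estimate are all correct.

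The one genuine error is the proposed compactness fallback. Property 4 does not yield $L^2$-precompactness, and for the kernels this proposition is designed for --- bounded, square-integrable kernels such as the truncated Gaussian $\Gamma_\varepsilon$ used later in the paper, which is what property 5 enforces --- no compactness is available: the conclusion of the proposition itself says that $|||\cdot|||$ is equivalent to the $L^2$ norm on $V_c$, so an energy-bounded sequence is merely an $L^2$-bounded sequence, and an orthonormal sequence supported in $\Omega$ has no $L^2$-convergent subsequence. Concretely, from $\|u_n\|_{L^2}=1$ and $|||u_n|||\to 0$ you can only extract a weakly convergent subsequence; weak lower semicontinuity of the energy gives a weak limit $u_\infty$ with $|||u_\infty|||=0$ and hence $u_\infty\equiv 0$, but this does not contradict $\|u_n\|_{L^2}=1$. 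Compact embedding of the energy space into $L^2$ holds only for singular, non-integrable kernels, which are excluded here. So the fallback cannot rescue the argument if the chain construction runs into trouble; the covering argument must be carried through in full, which is precisely what the reference cited by the paper does.
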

The proof is given by \citet[p.684]{du_ana}.\\

The dual space of $V_c(\Omega\cup\Omega_{\mathcal{I}})$ with respect to the standard $L^2(\Omega\cup\Omega_{\mathcal{I}})$ pairing is denoted by $V'_c(\Omega\cup\Omega_{\mathcal{I}})$. If $\gamma$ satisfies all properties of Proposition \ref{prokernel}, this dual is equivalent to $L^2_c(\Omega\cup\Omega_{\mathcal{I}})$ as well. The norm on $V'_c(\Omega\cup\Omega_{\mathcal{I}})$ can naturally be defined as
\[\|f\|_{V'_c(\Omega\cup\Omega_{\mathcal{I}})} := \sup_{u\in V_c(\Omega\cup\Omega_{\mathcal{I}}),\ u\neq 0} \frac{\int_{\Omega\cup\Omega_{\mathcal{I}}} fu\ dx}{|||u|||}.\]
Especially for the kernel function considered in this context, it is true that $V'_c(\Omega\cup\Omega_{\mathcal{I}})$ is equivalent to $L^2_c(\Omega\cup\Omega_{\mathcal{I}})$ such that $\|f\|_{V'_c(\Omega\cup\Omega_{\mathcal{I}})}\le \|f\|_{L^2(\Omega\cup\Omega_{\mathcal{I}})}$
(c. \citealp[p.248]{delia}).\\

The spatial Ramsey model is defined over a space-time cylinder, hence we have to consider the time dependent spaces
\[L^2(0,T;V_c(\Omega\cup\Omega_{\mathcal{I}})):=\{u(\cdot,t)\in V_c(\Omega\cup\Omega_{\mathcal{I}}):\ |||u(\cdot,\cdot)|||\in L^2(0,T)\},\]
and
\[L^2(0,T;V'_c(\Omega\cup\Omega_{\mathcal{I}})):=\{u(\cdot,t)\in V'_c(\Omega\cup\Omega_{\mathcal{I}}):\ \|u(\cdot,\cdot)\|_{V'_c}\in L^2(0,T)\},\]
for $T>0$ respectively \citep[p.10]{delia2}.\\
For functions that are weakly differentiable according to time, we define the space
\[H^1(0,T;V_c(\Omega\cup\Omega_{\mathcal{I}})):=\{u\in L^2(0,T;V_c(\Omega\cup\Omega_{\mathcal{I}})):\ \frac{\partial u}{\partial t }\in L^2(0,T;V_c'(\Omega\cup\Omega_{\mathcal{I}}))\}.\]

We can conclude later that the weak solution of our problem is not only weakly differentiable, but also continuous in the time variable. Hence, the function space where we expect our weak solution to live in, is
\[ \mathcal{C}(0,T;V_c(\Omega\cup\Omega_{\mathcal{I}})) \cap H^1(0,T;V_c(\Omega\cup\Omega_{\mathcal{I}})).\]
This intersection has to be understood as subspace of $ \mathcal{C}(0,T;V_c(\Omega\cup\Omega_{\mathcal{I}})).$


\section{The Weak Solution over Bounded Spatial Domains}\label{EWSBD}

We embed the nonlocal spatial Ramsey model on bounded spatial domains to the nonlocal vector calculus. We base our assumptions on the papers of \citet{delia} and \citet{delia2}. First, we discuss the nonlocal vector calculus with respect to applicability to our model. Afterwards, we derive an existence and several regularity results of the weak solution of the nonlocal spatial Ramsey model on bounded domains.

\subsection{Embedding the Nonlocal Spatial Ramsey Model in the Nonlocal Vector Calculus}

We assume that the domain $\Omega\subset\R^n$ has a (at least piecewise) smooth boundary and satisfies the cone condition. The interaction domain $\Omega_{\mathcal{I}}$ and the nonlocal closure $\Omega\cup\Omega_{\mathcal{I}}$ are assumed to have the same properties. We set $\alpha:\R^n\times \R^n\to \R$,
\begin{equation*}
\begin{split}
&\alpha(x,y)=\alpha_\varepsilon(x,y):=  \\
&\operatorname{sign}(\|x\|_2-\|y\|_2) \left(\frac{1}{\sqrt{(2\pi\sigma^2)^n}} \exp\left( -\frac{1}{2} (x-y)^T\Sigma_\sigma^{-1}(x-y) \right)\mathds{1}_{B_\varepsilon(x)}(y) \right)^{\frac{1}{2}},
\end{split}
\end{equation*}
for $\varepsilon,\ \sigma>0$, and a covariance matrix $\Sigma_\sigma\in\R^{n\times n}$. The parameter $\varepsilon$ will be referred to as the interaction radius. The nonlocal interaction domain is then given as
\[\Omega_{\mathcal{I}} :=\{y\in\R^n\backslash \Omega:\ \|y-x\|_2<\varepsilon\mbox{ for } x\in\Omega\}.\]
Note, that we do explicitly allow to choose $\sigma\neq \varepsilon$. But since  $\varepsilon$ is the only parameter that is important for estimates and the calculations below, we only keep the dependence of $\alpha_\varepsilon$ on $\sigma$ in mind and do not use it in the notation.  \\
The function $\alpha_\varepsilon$ is obviously antisymmetric and we can easily calculate that the kernel function in the nonlocal Ramsey model indeed has the form
 \[\Gamma_\varepsilon(x,y) = \alpha_\varepsilon^2(x,y) = \frac{1}{\sqrt{(2\pi\sigma^2)^n}} \exp\left( -\frac{1}{2} (x-y)^T\Sigma_\sigma^{-1}(x-y) \right)\mathds{1}_{B_\varepsilon(x)}(y).\]
We assume that the covariance matrix is a diagonal matrix with equal entries, 
 \[\Sigma_\sigma:=\begin{bmatrix}
 \sigma^2 &&\\
 &\ddots&\\
 && \sigma^2
 \end{bmatrix},\]
such that $\det(\Sigma_\sigma)=\sigma^{2n}$. Then, the kernel function satisfies all properties required in order to fit the nonlocal vector calculus, which we prove with the following Lemma:

\begin{lemma}\label{kernelProps}
The kernel function $\Gamma_\varepsilon$ satisfies all properties of Proposition \ref{prokernel}. 
\end{lemma}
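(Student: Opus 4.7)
The plan is to verify each of the five properties (1)--(5) in Proposition~\ref{prokernel} in turn, exploiting that the simplified form of the kernel under $\Sigma_\sigma = \sigma^2 I$ is
\[
\Gamma_\varepsilon(x,y) \;=\; \frac{1}{(2\pi\sigma^2)^{n/2}}\, \exp\!\left(-\frac{\|x-y\|_2^2}{2\sigma^2}\right)\mathds{1}_{B_\varepsilon(x)}(y).
\]
Properties (1) and (3) are essentially tautological: the Gaussian factor is strictly positive, while the indicator ensures that $\Gamma_\varepsilon$ vanishes outside $B_\varepsilon(x)$, so in particular on $(\Omega\cup\Omega_{\mathcal I})\setminus B_\varepsilon(x)$. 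I would dispatch these in one line.

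For property (2), I would simply note that on $B_{\varepsilon/2}(x)$ one has $\|x-y\|_2\le \varepsilon/2$, so monotonicity of $\exp$ gives the concrete lower bound
\[
\gamma_0 \;:=\; \frac{1}{(2\pi\sigma^2)^{n/2}}\exp\!\left(-\frac{\varepsilon^2}{8\sigma^2}\right) \;>\; 0,
\]
and this $\gamma_0$ works uniformly in $x$.

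Property (4) is the only step where the geometry of $\Omega_{\mathcal{I}}$ matters. The key observation is that, by the very definition of the interaction domain used in this paper, $B_\varepsilon(x)\subset \Omega\cup\Omega_{\mathcal{I}}$ for every $x\in\Omega$, so the domain of integration equals $B_\varepsilon(x)$ in full. Restricting further to $B_{\varepsilon/2}(x)\subset B_\varepsilon(x)$ and applying the pointwise bound from (2) yields
\[
\int_{(\Omega\cup\Omega_{\mathcal I})\cap B_\varepsilon(x)} \Gamma_\varepsilon(x,y)\,dy \;\ge\; \gamma_0\cdot \mathrm{vol}\bigl(B_{\varepsilon/2}(x)\bigr),
\]
which is a positive constant independent of $x$; take that to be $\gamma_1$.

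For property (5), I would square the kernel, drop the indicator to enlarge the domain of integration from $(\Omega\cup\Omega_{\mathcal{I}})\cap B_\varepsilon(x)$ to all of $\R^n$, translate by $z=y-x$, and evaluate the resulting Gaussian integral explicitly,
\[
\int_{\R^n}\exp\!\left(-\frac{\|z\|_2^2}{\sigma^2}\right)\,dz \;=\; \pi^{n/2}\sigma^n,
\]
which gives a uniform bound $\gamma_2^2 = \bigl(2^n\pi^{n/2}\sigma^n\bigr)^{-1}$. I do not expect any real obstacle here: the kernel is a truncated, constant-covariance Gaussian, and every property reduces to a one-line pointwise estimate or to a standard Gaussian integral. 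The only small subtlety worth stating explicitly is the inclusion $B_\varepsilon(x)\subset\Omega\cup\Omega_{\mathcal I}$ in step~(4), since all other properties are purely pointwise.
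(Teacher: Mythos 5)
Your proposal is correct and follows essentially the same route as the paper: both verify the five properties directly from the explicit form of the truncated Gaussian kernel, using the indicator for (3), monotonicity of the exponential for (1), (2) and (4), and a uniform bound on the squared Gaussian for (5). The only cosmetic differences are that you evaluate the Gaussian integral in (5) exactly where the paper just bounds the exponential by $1$ over $B_\varepsilon(x)$, and you make explicit the inclusion $B_\varepsilon(x)\subset\Omega\cup\Omega_{\mathcal I}$ in step (4), which the paper uses implicitly.
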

\begin{proof}
The kernel function $\Gamma_\varepsilon$ is given by
\[\Gamma_\varepsilon(x,y) := \frac{1}{\sqrt{(2\pi\sigma^2)^n}}\exp\left( - \frac{\|x-y\|_2^2}{2\sigma^2}\right)\mathds{1}_{B_\varepsilon(x)}(y),\]
which is obviously symmetric. We go on checking all properties as in Proposition \ref{prokernel}.\\
\begin{itemize}
\item[(1)],(2) Let $0<\eta\le \varepsilon$. For all $y\in B_{\eta}(x)$ it is true that
\begin{equation*}
\begin{split}
\Gamma_\varepsilon(x,y) 
&\ge  \frac{1}{\sqrt{(2\pi\sigma^2)^n}}\exp\left( - \frac{\eta^2}{\sigma^2}\right)>0
\end{split}
\end{equation*}

\item[(3)] The third property follows with the definition of the indicator function.
\item[(4)] For $x\in\Omega$, we calculate

\begin{equation*}
\begin{split}
\int_{(\Omega \cup \Omega_{\mathcal{I}}) \cap B_{\varepsilon}(x)} \Gamma_\varepsilon(x,y)\ dy & = \int_{B_\varepsilon(x)}  \frac{1}{\sqrt{(2\pi\sigma^2)^n}}\exp\left( - \frac{\|x-y\|_2^2}{2\sigma^2}\right) \ dy\\
& \ge \int_{B_\varepsilon(x)}  \frac{1}{\sqrt{(2\pi\sigma^2)^n}}\exp\left( \frac{- \varepsilon^2}{\sigma^2}\right) \ dy\\
& = c_n\varepsilon^n \frac{1}{\sqrt{(2\pi\sigma^2)^n}} \exp\left( - \frac{\varepsilon^2}{\sigma^2}\right)>0
\end{split}
\end{equation*}
where $c_n$ denotes the volume of the unit sphere in $\R^n$.
\item[(5)] For the last property, we calculate for $x\in\Omega$,
\begin{equation*}
\begin{split}
\int_{\Omega\cup \Omega_{\mathcal{I}}}\Gamma_\varepsilon^2(x,y) \ dy &= \int_{\Omega \cup \Omega_{\mathcal{I}}} \frac{1}{(2\pi\sigma^2)^n}\exp\left( - \frac{\|x-y\|_2^2}{\sigma^2} \right) \mathds{1}_{B_\varepsilon(x)}(y) \ dy\\
& = \int_{B_\varepsilon(x)}  \frac{1}{(2\pi\sigma^2)^n}\exp\left( - \frac{\|x-y\|_2^2}{\sigma^2} \right)\ dy\\
& \le  \int_{B_\varepsilon(x)}  \frac{1}{(2\pi\sigma^2)^n}\exp\left( - \frac{0}{\sigma^2} \right)\ dy\\
&  = \frac{c_n\varepsilon^n}{(2\pi\sigma^2)^n}<\infty,
\end{split}
\end{equation*}
which completes the proof.
\end{itemize}

\end{proof}

We now have all at hand to define the nonlocal spatial Ramsey model with endogenous productivity growth under a finite time horizon and an open, but bounded spatial domain:\\

For a given initial condition $k_0\in V_c(\Omega)$, find an optimal control $c^*\in \mathcal{U}_{ad}\subset L^2(0,T;V'_c(\Omega))$ and an optimal state $k^*\in  \mathcal{C}(0,T;V_c(\Omega\cup\Omega_{\mathcal{I}})) \cap H^1(0,T;V_c(\Omega\cup\Omega_{\mathcal{I}}))$, such that 

\begin{equation}\label{Jnonlocal}
\begin{split}
\mathcal{J}(k,c)&  := \int_0^T\int_{\Omega} -U(c(x,t))e^{-\tau t-\gamma \|x\|^2_2} \ dxdt + \frac{1}{2\rho}\int_\Omega (k(x,T)-k_T(x))^2\ dx
\end{split}
\end{equation}

is minimized subject to $k$ and $c$ satisfying 

\begin{equation}\label{cons_nonlocal}
\begin{split}
\frac{\partial k}{\partial t} - \beta\mathcal{NL}_\varepsilon (k) +\delta k & = \mathcal{P}(k) - c \hspace*{1cm}\mbox{  on  }\Omega\times (0,T),\\
k &= 0 \hspace*{2.35cm} \mbox{  on  } \Omega_{\mathcal{I}}\times(0,T),\\[2mm]
c \in \mathcal{U}_{ad},\  k&\ge 0 \hspace*{2.35cm} \mbox{ on } \Omega\times(0,T),\\[2mm]
k(\cdot,0) &= k_0>0 \hspace*{1.5cm}\mbox{  in  } \Omega,
\end{split}
\end{equation}

where $\mathcal{NL}_\varepsilon$ is given as in (\ref{nonlocaldiff}) with the  kernel function $\gamma:=\Gamma_\varepsilon$ and the nonlocal productivity-production operator $\mathcal{P}$ is defined as
\[\mathcal{P}(k)(x,t):=A_0(x)\exp\left(\frac{\int_{\Omega\cup\Omega_{\mathcal{I}}} \phi(k(y,t))\Gamma_\mu(x,y)\ dy}{\int_{\Omega\cup\Omega_{\mathcal{I}}} \phi(k(y,t))\Gamma_\varepsilon(x,y)\ dy + \xi}  t\right)p(k(x,t))\]
for a non-negative, continuous, and real valued function $\phi$ and constants $\xi>0$ and $0<\mu<\varepsilon$. The set $\mathcal{U}_{ad}$ denotes the set of feasible controls, which will be described in more detail later.

\begin{remark}
Note that the definition of the kernel function as a truncated Gaussian density function, depending on an indicator function which is determined by the parameters $\mu$ and $\varepsilon$, guarantees that
\[\frac{\int_{\Omega\cup\Omega_{\mathcal{I}}} \phi(k(y,t))\Gamma_\mu(x,y)\ dy}{\int_{\Omega\cup\Omega_{\mathcal{I}}} \phi(k(y,t))\Gamma_\varepsilon(x,y)\ dy + \xi} \le 1.\]
\end{remark}

\subsection{Existence of a Weak Solution}

The main part of this section is the proof of the existence of a weak solution of the problem (\ref{cons_nonlocal}). We apply an abstract existence result by \citet{wloka} and the fixed point theorem of Banach. To do so, we exploit the Lipschitz continuity of the productivity-production operator $\mathcal{P}$ and refer to a result by \citet[p.686]{du_ana} which states that the linear nonlocal diffusion problem with homogeneous Dirichlet-type volume constraints has a unique weak solution in the volume constrained energy space.\\

Throughout this section, we assume that the depreciation rate $\delta$, the initial productivity distribution function $A_0$, the nominal function $\phi$, and the nonlinear production function $p$ satisfy the following assumptions:

\begin{assumption}
Consider the functions $p:\R\to\R_+$, $\phi:\R\to\R_+$, and $A_0:\R^n\to\R_+$. Then, we assume
\begin{itemize}
\item The neoclassical production function $p$ is bounded from above by a constant $M_p>0$, it satisfies $p(0)=0$ and is Lipschitz continuous with Lipschitz constant $L_p>0$.
\item The nominal function $\phi$ is Lipschitz continuous with constant $L_\phi>0$.
\item The initial productivity distribution function $A_0$ is in $L^\infty(\Omega)$.
\end{itemize}
\end{assumption}

We derive the weak formulation of the system (\ref{cons_nonlocal}), i.e. we multiply the state equation with a test function $\varphi \in C(0,T; V_c(\Omega\cup\Omega_{\mathcal{I}}))$ and integrate over $\Omega\times(0,T)$ which yields

\begin{equation}
\begin{split}
\int_0^T \int_{\Omega} k_t \varphi \ dx dt & -  \int_0^T\int_{\Omega}\mathcal{NL}_{\varepsilon}(k)\ \varphi \ dx dt +\delta \int_0^T \int_{\Omega}  k \varphi \ dxdt\\
& = \int_0^T \int_{\Omega} (\mathcal{P}(k) - c)\varphi \ dxdt.
\end{split}
\end{equation}
Applying the nonlocal Green's first identity (see \citet[p.676]{du_ana}) then gives us

\begin{equation}\label{PIDE_weak}
\begin{split}
\int_0^T \int_{\Omega} k_t \varphi \ dx dt & + \frac{1}{2}\int_0^T \int_{\Omega\cup\Omega_{\mathcal{I}}} \int_{\Omega\cup\Omega_{\mathcal{I}}}\mathcal{D}^*(k)^T\mathcal{D}^*(\varphi)\ \ dydx dt +\delta \int_0^T \int_{\Omega}  k \varphi \ dxdt\\
& = \int_0^T \int_{\Omega} (\mathcal{P}(k) - c)\varphi \ dxdt.
\end{split}
\end{equation}

Analogously to the spatially unbounded case, this weak formulation of the capital accumulation equation gives rise to the following definition of a bilinear form 
\[{\bf a}:V_c(\Omega\cup\Omega_{\mathcal{I}})\times V_c(\Omega\cup\Omega_{\mathcal{I}}) \to\R ,\]

\begin{equation}\label{a_bounded}
{\bf a}(u,v) := \frac{1}{2}\int_{\Omega\cup\Omega_{\mathcal{I}}} \int_{\Omega\cup\Omega_{\mathcal{I}}}\mathcal{D}^*(u)^T\mathcal{D}^*(v) \ dydx + \delta \int_{\Omega}  u \ v \ dx.
\end{equation} 

We prove the coercivity and continuity of ${\bf a}$ in the following Lemma.

\begin{lemma}\label{a_properties}
The bilinear form ${\bf a}$ is coercive and continuous, hence there exists a constant $c_1>0$ such that
\begin{itemize}
\item[(i)] $|{\bf a} (u,v)| \le c_1|||u|||\ |||v|||$,
\item[(ii)] ${\bf a}(u,u) \ge |||u||| $.
\end{itemize}
\end{lemma}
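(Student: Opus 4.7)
The plan is to verify both parts by direct computation, using only the definition of the nonlocal energy norm in Definition \ref{vces} and the norm equivalence (\ref{norm_equivalnece}) from Proposition \ref{prokernel}, which is available by Lemma \ref{kernelProps}. I expect this to be essentially a bookkeeping argument; the main thing to be careful about is tracking which integrals live on $\Omega$ versus $\Omega\cup\Omega_{\mathcal{I}}$, since the $\delta$-term is integrated only over $\Omega$.

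For the continuity bound (i), I would split ${\bf a}(u,v)$ into its two summands. The double integral is exactly the inner product on $\Omega\cup\Omega_{\mathcal{I}}$ that induces $|||\cdot|||$, so the Cauchy--Schwarz inequality in $L^2((\Omega\cup\Omega_{\mathcal{I}})\times(\Omega\cup\Omega_{\mathcal{I}}))$ gives
\[\tfrac12\int\!\!\int \mathcal{D}^*(u)^T\mathcal{D}^*(v)\,dy\,dx \le |||u|||\,|||v|||.\]
For the zeroth-order term, I would apply Cauchy--Schwarz in $L^2(\Omega)$, bound $\|\cdot\|_{L^2(\Omega)}\le\|\cdot\|_{L^2(\Omega\cup\Omega_{\mathcal{I}})}$, and then invoke the left inequality in (\ref{norm_equivalnece}) to get $\delta\int_\Omega uv\,dx\le \delta C_1^{-2}|||u|||\,|||v|||$. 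Adding the two estimates yields the claim with $c_1:=1+\delta C_1^{-2}$.

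For the coercivity bound (ii), the statement should be read as ${\bf a}(u,u)\ge |||u|||^2$. This is immediate from the definition: the first term of ${\bf a}(u,u)$ equals $|||u|||^2$ by Definition \ref{vces}, and the second term $\delta\int_\Omega u^2\,dx$ is non-negative since $\delta>0$ (the depreciation rate). Discarding the non-negative $L^2$-contribution gives the inequality.

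There is no genuine obstacle: the only point worth remarking on is that coercivity with respect to the energy norm here comes entirely from the nonlocal diffusion part, while the $\delta$-term merely contributes a non-negative bonus. The role of Proposition \ref{prokernel} is confined to the continuity estimate, where the $L^2$-to-energy-norm comparison is needed to express the zeroth-order term in the energy norm.
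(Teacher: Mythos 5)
Your proof is correct and follows essentially the same route as the paper's: Cauchy--Schwarz on the double integral, Cauchy--Schwarz plus the norm equivalence (\ref{norm_equivalnece}) for the zeroth-order term, and coercivity read off directly from the definition of $|||\cdot|||$ with the $\delta$-term discarded as non-negative (the paper's passing invocation of a Poincar\'e inequality there is superfluous). Your constant bookkeeping is in fact the more careful version --- $\delta C_1^{-2}$ rather than the paper's $\delta/C_1$, and the factor $\tfrac12$ in the energy norm handled correctly --- and you are right that (ii) must be read as ${\bf a}(u,u)\ge |||u|||^2$.
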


\begin{proof}
\begin{itemize}
\item[(i)] Choose $u,v\ \in V_c(\Omega\cup \Omega_{\mathcal{I}})$. Then,
\begin{equation*}
\begin{split}
|{\bf a}(u,v)| & = \left| \frac{1}{2}\int_{\Omega\cup\Omega_{\mathcal{I}}} \int_{\Omega\cup\Omega_{\mathcal{I}}}\mathcal{D}^*(u)(x,y)^T\mathcal{D}^*(v)(x,y) \ dydx   + \delta \int_{\Omega}  u\ v \ dx \right|\\
&\le \frac{1}{2} \int_{\Omega\cup\Omega_{\mathcal{I}}} \int_{\Omega\cup\Omega_{\mathcal{I}}} |\mathcal{D}^*(u)(x,y)^T\mathcal{D}^*(v)(x,y)| \ dydx + \delta \int_{\Omega} |uv|\ dx.
\end{split}
\end{equation*}
We use the Cauchy Schwartz inequality which yields together with the norm equivalence (\ref{norm_equivalnece}),
\begin{equation*}
\begin{split}
|{\bf a}(u,v)| &\le \frac{1}{2}|||u|||\ |||v||| + \delta \int_\Omega|u\ v| \ dx\\ &\le \frac{1}{2}|||u|||\ |||v||| + \delta  \|u\|_{L^2(\Omega)}\|v\|_{L^2(\Omega)}\\
&\le \left( \frac{1}{2} + \frac{\delta}{C_1} \right)|||u|||\ |||v|||
\end{split}
\end{equation*}
for the constant $C_1>0$ from (\ref{norm_equivalnece}).

\item[(ii)] Applying the Poincare inequality, we have
\begin{equation*}
\begin{split}
{\bf a}(u,u) & = \frac{1}{2} \int_{\Omega\cup\Omega_{\mathcal{I}}}\int_{\Omega\cup\Omega_{\mathcal{I}}} \left( \mathcal{D}^*(u)(x,y)\right)^2 \ dydx + \delta \int_{\Omega}u^2(x) \ dx\\
&\ge |||u|||^2 + \delta \|u\|^2_{L^2(\Omega)}\\
&\ge  |||u|||^2.
\end{split}
\end{equation*}
\end{itemize}
\end{proof}

We can now prove the existence of a weak solution of the capital accumulation equation. We apply Banach's fixed point theorem and an existence result for a linear but inhomogeneous nonlocal diffusion equation given by \citet[Theorem 5.1, p.686]{du_ana}.

\begin{theorem}\label{Existence_semilinear_bounded}
For a given $c \in L^2(0,T;V_c'(\Omega)) $ and $k_0\in V_c(\Omega),$ the problem (\ref{PIDE_weak}) with $k(0,x) = k_0(x)$ in $\Omega$ and $k=0$ on $ \Omega_{\mathcal{I}}$,  has a unique weak solution $k^*\in C(0,T;V_c(\Omega\cup\Omega_{\mathcal{I}}))\cap H^1(0,T;V_c(\Omega\cup\Omega_{\mathcal{I}}))$.
\end{theorem}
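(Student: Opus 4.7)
The plan is to recast this semilinear problem as a fixed-point problem for a linear one, exploiting \citet[Theorem~5.1]{du_ana}, which guarantees existence of a unique weak solution in $C(0,T;V_c(\Omega\cup\Omega_{\mathcal{I}})) \cap H^1(0,T;V_c(\Omega\cup\Omega_{\mathcal{I}}))$ for the linear inhomogeneous nonlocal diffusion equation with homogeneous Dirichlet volume constraints. Concretely, I would define the solution map
\begin{equation*}
F\colon L^2(0,T;V_c(\Omega\cup\Omega_{\mathcal{I}}))\to C(0,T;V_c(\Omega\cup\Omega_{\mathcal{I}})) \cap H^1(0,T;V_c(\Omega\cup\Omega_{\mathcal{I}})),
\end{equation*}
sending $w$ to the unique weak solution $k=F(w)$ of
\begin{equation*}
k_t - \beta\,\mathcal{NL}_\varepsilon k + \delta k = \mathcal{P}(w) - c,\qquad k|_{\Omega_{\mathcal{I}}}=0,\qquad k(\cdot,0)=k_0.
\end{equation*}
A weak solution of the semilinear problem is then precisely a fixed point of $F$, so Banach's theorem (applied in a suitable norm) will give both existence and uniqueness.

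Before invoking the linear existence result I would verify that the right-hand side lies in $L^2(0,T;V_c'(\Omega))$. Using the boundedness of $p$ by $M_p$, the assumption $A_0\in L^\infty(\Omega)$, and the uniform bound on the exponential ratio from the preceding remark (so that the argument of the outer exponential is at most $T$), I obtain the pointwise estimate $|\mathcal{P}(w)(x,t)| \le e^{T}\|A_0\|_{L^\infty(\Omega)} M_p$. Hence $\mathcal{P}(w)\in L^\infty(\Omega\times(0,T))\hookrightarrow L^2(0,T;V_c'(\Omega))$, so $F$ is well defined, and by the linear theorem $F(w)$ even belongs to the claimed regularity class.

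For the contraction step I would take $w_1,w_2$, set $k_i:=F(w_i)$, and test the equation satisfied by $\tilde k:=k_1-k_2$ (which has vanishing initial and volume data) against $\tilde k$ itself. Using $\frac{d}{dt}\|\tilde k\|_{L^2(\Omega)}^2=2\langle \tilde k_t,\tilde k\rangle$, the coercivity from Lemma~\ref{a_properties}(ii), and a Lipschitz estimate $\|\mathcal{P}(w_1)-\mathcal{P}(w_2)\|_{L^2(\Omega)} \le L_{\mathcal{P}}\|w_1-w_2\|_{L^2(\Omega)}$, Cauchy--Schwarz and Young's inequality yield a Gronwall-type differential inequality for $\|\tilde k\|_{L^2}^2$. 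Passing to the exponentially weighted norm $\interleave u\interleave_\lambda^2:=\int_0^T e^{-\lambda t}\|u(\cdot,t)\|_{L^2(\Omega)}^2\,dt$ and choosing $\lambda$ large in terms of $L_{\mathcal{P}}$ makes $F$ a strict contraction with respect to $\interleave\cdot\interleave_\lambda$, which is equivalent to the usual $L^2(0,T;L^2(\Omega))$ norm. Banach's theorem then produces a unique fixed point $k^*$, whose regularity is inherited from $F$.

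The main obstacle is the global Lipschitz estimate for $\mathcal{P}$. The difficulty lies in the nonlinear exponential factor
\begin{equation*}
\exp\!\left(\frac{\int_{\Omega\cup\Omega_{\mathcal{I}}}\phi(k(y,t))\Gamma_\mu(x,y)\,dy}{\int_{\Omega\cup\Omega_{\mathcal{I}}}\phi(k(y,t))\Gamma_\varepsilon(x,y)\,dy+\xi}\,t\right),
\end{equation*}
which depends on $k$ through both numerator and denominator. I would write the difference of the two ratios over the common denominator, which is bounded below by $\xi^2>0$, then exploit the Lipschitz continuity of $\phi$ together with the $L^\infty$ bounds on $\Gamma_\mu$ and $\Gamma_\varepsilon$ implicit in Lemma~\ref{kernelProps} to Lipschitz-estimate the ratio in $k$. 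Since the argument of the outer exponential is bounded by $T$, the elementary bound $|e^a-e^b|\le e^{T}|a-b|$ applies, and combining this with the Lipschitz property of $p$ and the bound $M_p$ yields a Lipschitz constant $L_{\mathcal{P}}$ depending on $T,\xi,L_\phi,L_p,M_p,\|A_0\|_{L^\infty}$ and $|\Omega\cup\Omega_{\mathcal{I}}|$, which is the estimate needed to close the contraction argument.
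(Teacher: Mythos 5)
Your proposal is correct and follows the same core strategy as the paper's proof: freeze the nonlinearity to define a solution map via the linear existence result for the nonlocal diffusion equation with homogeneous Dirichlet volume constraints, establish a Lipschitz estimate for $\mathcal{P}$ by combining the product splitting of $A_0\,e^{(\cdot)}p(\cdot)$ with a common-denominator decomposition of the exponential ratio (denominators bounded below by powers of $\xi$, numerator differences controlled through the Lipschitz continuity of $\phi$ and the integrability of the kernels), and then apply Banach's fixed point theorem. The one genuine difference is how the contraction is obtained: the paper shrinks the time horizon to $T^*$ so that its constant $C(T^*)\to 0$ and then continues the solution over $[T^*,2T^*],\dots$ in finitely many steps, whereas you work on all of $[0,T]$ at once in an exponentially weighted Bielecki norm and absorb the Lipschitz constant by taking $\lambda$ large. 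Your variant is arguably cleaner, since it avoids the concatenation argument --- the least carefully justified step in the paper's proof --- and yields global existence and uniqueness from a single application of the fixed point theorem; the paper's variant keeps all estimates in the unweighted norms used in the rest of the analysis. Two small points to watch: the Lipschitz estimate for the integral operators $\Phi_\nu$ naturally produces $\|w_1-w_2\|_{L^2(\Omega\cup\Omega_{\mathcal{I}})}$ rather than $\|w_1-w_2\|_{L^2(\Omega)}$, so you should either restrict to iterates satisfying the homogeneous volume constraint (which $F$ automatically produces) or carry the larger domain through the estimate; and the Lipschitz constant of the exponential factor is genuinely $t$-dependent (the paper's $L(s)=\|A_0\|_{L^\infty(\Omega)}(L_pe^s+MKs)$), though it is uniformly bounded on $[0,T]$, so your time-independent $L_{\mathcal{P}}$ is legitimate.
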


\begin{proof}
Let $\mathcal{S}: C(0,T;V_c(\Omega\cup\Omega_{\mathcal{I}}))\cap H^1(0,T;V_c(\Omega\cup\Omega_{\mathcal{I}})) \to C(0,T;V_c(\Omega\cup\Omega_{\mathcal{I}}))\cap H^1(0,T;V_c(\Omega\cup\Omega_{\mathcal{I}}))$ be the operator that maps a function $v$ to the unique function $k$ that satisfies $k(x,0)=k_0(x)$ on $\Omega$, $k=0$ on $\Omega_{\mathcal{I}}\times(0,T)$ and that solves the weak formulation of the linear equation
\begin{equation*}
\begin{split}
\int_0^T \int_{\Omega} k_t(x,t)\varphi(x,t)\ dxdt + &\int_0^T {\bf a}(k(\cdot,t),\varphi(\cdot,t)) dt =\\
 &\int_0^T \int_{\Omega} (\mathcal{P}(v)(x,t) - c(x,t))\varphi(x,t)\ dxdt
\end{split}
\end{equation*}
for all $\varphi \in \mathcal{C}(0,T;V_c(\Omega\cup\Omega_{\mathcal{I}}))$. We fix $T^*\in(0,T)$ sufficiently small and consider the difference $S(v_1)-S(v_2)$ for two arbitrary functions $v_1,\ v_2 \in C(0,T^*;V_c(\Omega\cup\Omega_{\mathcal{I}}))$ with $S(v_1)=k_1$ and $S(v_2)=k_2$. We choose the test function $k_1-k_2 \in  \mathcal{C}(0,T^*;V_c(\Omega\cup\Omega_{\mathcal{I}}))\cap H^1(0,T^*;V_c(\Omega\cup\Omega_{\mathcal{I}}))$. Then, $k_1-k_2$ solves
\begin{equation*}
\begin{split}
&\int_0^t \int_{\Omega} (k_1-k_2)_t(x,s)(k_1-k_2)(x,s) \ dx + {\bf a}(k_1-k_1,k_1-k_2)(s)ds =\\
& \hspace*{4cm} \int_0^t \int_{\Omega} \left( \mathcal{P}(v_1)(s,x) - \mathcal{P}(v_2)(x,s)\right)(k_1-k_2)(x,s) \ dxds,
\end{split}
\end{equation*}
for all $t\in[0,T^*]$ according to \citet[p.686]{du_ana}.\\
We estimate the left-hand side using the coercivity property of the bilinear form ${\bf a}$ according to Lemma \ref{a_properties}:
\begin{equation*}
\begin{split}
&\int_0^t \int_{\Omega} (k_1-k_2)_t(x,s)(k_1-k_2)(x,s) \ dx + {\bf a}(k_1-k_1,k_1-k_2)(s)ds \\
&\ge \int_0^t \int_{\Omega} (k_1-k_2)_t(x,s) (k_1-k_2)(x,s) \ dx + |||k_1-k_2(s)|||^2 ds\\
& = \frac{1}{2} \|k_1-k_2(t)\|^2_{L^2(\Omega)} +  \int_0^t |||k_1-k_2(s)|||^2 ds.
\end{split}
\end{equation*}

For the right-hand side, we exploit the Lipschitz property of $\mathcal{P}$ on bounded spatial domains as follows: First, we apply the H\"older inequality,

\begin{equation*}
\begin{split}
&\int_0^t \int_{\Omega} \left( \mathcal{P}(v_1)(x,s)-\mathcal{P}(v_2)(x,s)  \right)\left(k_1(x,s)-k_2(x,s)\right)\ dxds \\
&\le \int_0^t \|\mathcal{P}(v_1)(\cdot,s)-\mathcal{P}(v_2)(\cdot,s)\|_{L^2(\Omega)}\|k_1(\cdot,s)-k_2(\cdot,s)\|_{L^2(\Omega)} ds:= (\#).\quad \\
\end{split}
\end{equation*}

Now, we add a `clever zero' and calculate

\begin{equation*}
\begin{split}
(\#)&=  \int_0^t \|\mathcal{P}(v_1)(\cdot,s)-P(v_1)(\cdot,s)p(v_2(\cdot,s)) + P(v_1)(\cdot,s)p(v_2(\cdot,s))-\mathcal{P}(v_2)(\cdot,s)\|_{L^2(\Omega)}\\
&\hspace*{1cm} \cdot\|k_1(\cdot,s)-k_2(\cdot,s)\|_{L^2(\Omega)} ds \\
&\le  \int_0^t \|\mathcal{P}(v_1)(\cdot,s)-P(v_1)(\cdot,s)p(v_2(\cdot,s))\|_{L^2(\Omega)}\|k_1(\cdot,s)-k_2(\cdot,s)\|_{L^2(\Omega)} ds\\
& + \int_0^t\| P(v_1)(\cdot,s)p(v_2(\cdot,s))-\mathcal{P}(v_2)(\cdot,s)\|_{L^2(\Omega)}\|k_1(\cdot,s)-k_2(\cdot,s)\|_{L^2(\Omega)} ds\\
&\le \int_0^t \| P(v_1)(\cdot,s)\|_{L^\infty(\Omega)}\|p(v_1(\cdot,s)-p(v_2(\cdot,s))\|_{L^2(\Omega)}\|k_1-k_2(s)\|_{L^2(\Omega)} ds\\
&+ \int_0^t \|p(v_2(\cdot,s))\|_{L^\infty(\Omega)}\|P(v_1)(\cdot,s)-P(v_2)(\cdot,s)\|_{L^2(\Omega)}\|k_1(\cdot,s)-k_2(\cdot,s)\|_{L^2(\Omega)} ds\\
&\le \int_0^t L_p \| P(v_1)(\cdot,s)\|_{L^\infty(\Omega)}\|v_1(\cdot,s)-v_2(\cdot,s)\|_{L^2(\Omega)}\|k_1(\cdot,s)-k_2(\cdot,s)\|_{L^2(\Omega)} ds\\
& + \int_0^t M\ \|P(v_1)(\cdot,s)-P(v_2)(\cdot,s)\|_{L^2(\Omega)} \|k_1-k_2(s)\|_{L^2(\Omega)} ds.
\end{split}
\end{equation*}

We have a closer look at the terms $\| P(v_1)(\cdot,s)\|_{L^\infty(\Omega)}$ and $\|P(v_1)(\cdot,s)-P(v_2)(\cdot,s)\|_{L^2(\Omega)}$. Again, we exploit the boundedness of the fraction in the exponential term of $\mathcal{P}$, which is bounded by
\[\frac{\int_{\Omega\cup\Omega_{\mathcal{I}}} \phi(v(y,s))\Gamma_{\mu}(x,y)\  dy}{\int_{\Omega\cup\Omega_{\mathcal{I}}} \phi(v(y,s))\Gamma_{\varepsilon}(x,y)\  dy +\xi}\le 1\]
by the definition of the indicator function, and the monotonicity of the integral. Hence, we can estimate

\begin{equation*}
\begin{split}
\|P(v)(\cdot,s)\|_{L^{\infty}(\Omega)}&= ess\ \sup_{x\in\Omega}|P(v)(x,s)| \\
&= ess\ \sup _{x\in\Omega} \left| A_0(x) \exp\left( \frac{\int_{\Omega\cup\Omega_{\mathcal{I}}} \phi(v(y,s))\Gamma_{\mu}(x,y)\  dy}{\int_{\Omega\cup\Omega_{\mathcal{I}}} \phi(v(y,s))\Gamma_{\varepsilon}(x,y)\  dy +\xi}\ s\right)\right|\\
&\le \|A_0\|_{L^{\infty}(\Omega)} \exp(s),
\end{split}
\end{equation*}
and

\begin{equation*}
\begin{split}
&\|P(v_1)(\cdot,s)-P(v_2)(\cdot,s)\|_{L^2(\Omega)} \le   \|A_0\|_{L^{\infty}(\Omega)}\ \left\| \exp\left(\frac{\int_{\Omega\cup\Omega_{\mathcal{I}}}\phi(v_1(y,s))\Gamma_{\mu}(\cdot,y)\  dy}{\int_{\Omega\cup\Omega_{\mathcal{I}}}\phi(v_1(y,s))\Gamma_{\varepsilon}(\cdot, y) \  dy+\xi}\ s\right) \right.\\
&\left. \hspace*{6cm}- \exp\left(\frac{\int_{\Omega\cup\Omega_{\mathcal{I}}}\phi(v_2(y,s))\Gamma_{\mu}(\cdot,y)\  dy}{\int_{\Omega\cup\Omega_{\mathcal{I}}}\phi(v_2(y,s))\Gamma_{\varepsilon}(\cdot,y) \  dy+\xi}\ s\right) \right\|_{L^2( \Omega)}
\end{split}
\end{equation*}

In order to keep a compact representation, we define the operator
\[\Phi_\nu(v)(x,s):= \int_{\Omega\cup\Omega_{\mathcal{I}}} \phi(v(y,s))\Gamma_\nu(x,y)\ dy,\]
for $\nu\in\{\mu,\varepsilon\}$. The exponential function is Lipschitz continuous on compact sets and due to the boundedness of the fractions occurring in the nonlocal productivity-production operator, we can estimate

\begin{equation*}
\begin{split}
&\left\| \exp\left(\frac{\Phi_\mu(v_1)(\cdot,s)}{\Phi_\varepsilon(v_1)(\cdot,s)+\xi}\ s\right) - \exp\left(\frac{\Phi_\mu(v_2)(\cdot,s)}{\Phi_\varepsilon(v_2)(\cdot,s)+\xi}\ s\right)\right\|_{L^2( \Omega)}\\[4mm]
\le\ & L_{exp}\ s \left\| \frac{\Phi_\mu(v_1)(\cdot,s)}{\Phi_\varepsilon(v_1)(\cdot,s)+\xi} - \frac{\Phi_\mu(v_2)(\cdot,s)}{\Phi_\varepsilon(v_2)(\cdot,s)+\xi}\  \right\|_{L^2( \Omega)}\\[4mm]
=\ & L_{exp}\ s \left\| \frac{ \Phi_\mu(v_1)(\cdot,s) \left( \Phi_\varepsilon(v_2)(\cdot,s) + \xi \right)}{\left(\Phi_\varepsilon(v_1)(\cdot,s)+\xi\right)\left( \Phi_\varepsilon(v_2)(\cdot,s) + \xi \right)} 
- \frac{\left(\Phi_\varepsilon(v_1)(\cdot,s)+\xi\right)\Phi_\mu(v_2)(\cdot,s)}{\left(\Phi_\varepsilon(v_1)(\cdot,s)+\xi\right)\left( \Phi_\varepsilon(v_2)(\cdot,s) + \xi \right)}\  \right\|_{L^2( \Omega)}\\[4mm]
\le\ & L_{exp}\ s \left(\int_\Omega\left[\left| \frac{ \Phi_\mu(v_1)(x,s) \Phi_\varepsilon(v_2)(x,s) }{\left(\Phi_\varepsilon(v_1)(x,s)+\xi\right)\left( \Phi_\varepsilon(v_2)(x,s) + \xi \right)} \right.\right. \right. \\
& \left.\left. \left. \hspace*{7.4cm} - \frac{\Phi_\mu(v_1)(x,s)\Phi_\varepsilon(v_1)(x,s)}{\left(\Phi_\varepsilon(v_1)(x,s)+\xi\right)\left( \Phi_\varepsilon(v_2)(x,s) + \xi \right)}\right| \right.\right.\\[3mm]
& \hspace*{.3cm} + \left. \left. \left| \frac{ \Phi_\mu(v_1)(x,s) \Phi_\varepsilon(v_1)(x,s) }{\left(\Phi_\varepsilon(v_1)(x,s)+\xi\right)\left( \Phi_\varepsilon(v_2)(x,s) + \xi \right)} 
- \frac{\Phi_\mu(v_2)(x,s)\Phi_\varepsilon(v_1)(x,s)}{\left(\Phi_\varepsilon(v_1)(x,s)+\xi\right)\left( \Phi_\varepsilon(v_2)(x,s) + \xi \right)}\right| \right.\right.\\[3mm]
& \hspace*{.3cm} + \left. \left.\left| \frac{\xi\left( \Phi_\mu(v_1)(x,s) - \Phi_\mu(v_2)(x,s)\right)}{\left(\Phi_\varepsilon(v_1)(x,s)+\xi\right)\left( \Phi_\varepsilon(v_2)(x,s) + \xi \right)}\right| \right]^2\ dx\right)^{\frac{1}{2}}\\[4mm]
\le \ & L_{exp}\ s \bigg( \int_{\Omega} \left( \frac{1}{\xi} \left| \Phi_\varepsilon(v_1)(x,s) - \Phi_\varepsilon(v_2)(x,s) \right| + \frac{2}{\xi} \left| \Phi_\mu(v_1)(x,s)- \Phi_\mu(v_2)(x,s) \right|  \right)^2\  dx \bigg)^{\frac{1}{2}},
\end{split}
\end{equation*}

since

\begin{equation*}
\begin{split}
\left|\frac{ \Phi_\mu(v_1)(x,s)  }{\left(\Phi_\varepsilon(v_1)(x,s)+\xi\right)\left( \Phi_\varepsilon(v_2)(x,s) + \xi \right)}\right| \le 
 \left|\frac{\Phi_\varepsilon(v_1)(x,s)}{\left(\Phi_\varepsilon(v_1)(x,s)+\xi\right)\left( \Phi_\varepsilon(v_2)(x,s) + \xi \right)}\right|\le \frac{1}{\xi} ,
\end{split}
\end{equation*}

and 
\begin{equation*}
\begin{split}
 \left|\frac{ \xi  }{\left(\Phi_\varepsilon(v_1)(x,s)+\xi\right)\left( \Phi_\varepsilon(v_2)(x,s) + \xi \right)}\right| \le \frac{1}{\xi},
\end{split}
\end{equation*}

for all $v_1,v_2\in \mathcal{C}(0,T^*;V_c(\Omega\cup\Omega_{\mathcal{I}}) )$ and $(x,s)\in \Omega\times[0,T]$. \\

Applying the Minkowski and the H\"older inequalities and exploiting the Lipschitz continuity of $\phi$, we end up with

\begin{equation*}
\begin{split}
&\left\| \exp\left(\frac{\int_{\Omega\cup\Omega_{\mathcal{I}}}\phi(v_1(y,s))\Gamma_{\mu}(\cdot,y)\  dy}{\int_{\Omega\cup\Omega_{\mathcal{I}}}\phi(v_1(y,s))\Gamma_{\varepsilon}(\cdot, y)\   dy+\xi}\ s\right) \right.\\
& \left. \hspace*{6cm}- \exp\left(\frac{\int_{\Omega\cup\Omega_{\mathcal{I}}}\phi(v_2(y,s)) \Gamma_{\mu}(\cdot,y)\  dy}{\int_{\Omega\cup\Omega_{\mathcal{I}}} \phi(v_2(y,s)) \Gamma_{\varepsilon}(\cdot,y) \ dy+\xi}\ s\right) \right\|_{L^2( \Omega)}\\[3mm]
&\le s K\ \|v_1(\cdot,s)-v_2(\cdot,s)\|_{L^2(\Omega\cup\Omega_{\mathcal{I}})}
\end{split}
\end{equation*}
with 

\[K:=\frac{1}{\xi} \left(L_{exp}L_\phi\|\Gamma_{\varepsilon}\|_{L^2(\Omega\times (\Omega\cup\Omega_{\mathcal{I}}))} + 2L_{exp}L\phi\|\Gamma_{\mu}\|_{L^2(\Omega\times (\Omega\cup\Omega_{\mathcal{I}}))}\right)<\infty .\]

Note that on the bounded domain $\Omega\cup\Omega_{\mathcal{I}}$, we have
 \[\|\Gamma_{\nu}\|_{L^2(\Omega\times(\Omega\cup\Omega_{\mathcal{I}}))}<\infty\]
for all $\nu>0$. Thus, we can estimate
\begin{equation*}
\begin{split}
&\|P(v_1)(\cdot,s)-P(v_2)(\cdot,s)\|_{L^2(\Omega)}\le s K \|A_0\|_{L^\infty(\Omega)} \|v_1(\cdot,s)-v_2(\cdot,s)\|_{L^2(\Omega\cup \Omega_{\mathcal{I}})}.
\end{split}
\end{equation*}

Combining both estimates for the left- and right-side of the PIDE and applying Young's inequality for a constant $\beta>0$, we get
\begin{equation*}
\begin{split}
& \frac{1}{2} \|k_1-k_2(t)\|^2_{L^2(\Omega)} + \int_0^t |||k_1-k_2(s)|||^2 ds\\
& \hspace*{4cm}\le  \int_0^t \frac{L(s)^2}{2\beta} \|v_1-v_2(s)\|^2_{L^2(\Omega\cup\Omega_{\mathcal{I}})} + \frac{\beta}{2} \|k_1-k_2\|^2_{L^2(\Omega)} ds 
\end{split}
\end{equation*}
with $L(s):=\|A_0\|_{L^\infty(\Omega)}(L_p\exp(s)+MKs).$\\

We choose $2C_1 < \beta < 2C_1\left(\frac{1}{2C_2}+1\right),$ where $C_1$ and $C_2$ are the constants from (\ref{norm_equivalnece}). Then, again with (\ref{norm_equivalnece}), we can interpret the inequality in terms of the $V_c(\Omega\cup\Omega_{\mathcal{I}})$-norm as follows
\begin{equation*}
\begin{split}
& \frac{1}{2C_2} |||k_1-k_2(t)|||^2 \le  \int_0^t\left( \frac{L(s)^2}{2\beta C_1} |||v_1-v_2(s)|||^2 + \left(\frac{\beta}{2C_1}-1\right) |||k_1-k_2|||^2\right)\  ds .
\end{split}
\end{equation*}
Note that we have once more exploited the Dirichlet volume constraints in order to rewrite
\[\|k_1-k_2(t)\|_{L^2(\Omega)} = \|k_1-k_2(t)\|_{L^2(\Omega\cup\Omega_{\mathcal{I}})}\]
Taking the maximum over all $t\in[0,T^*]$ and sorting the terms, we have
\begin{equation*}
\begin{split}
&\frac{1}{2C_2}\|k_1-k_2\|^2_{L^\infty(0,T^*;V_c(\Omega\cup\Omega_{\mathcal{I}}))} + (1-\frac{\beta}{2C_1})T^* \|k_1-k_2\|^2_{L^\infty(0,T^*;V_c(\Omega\cup\Omega_{\mathcal{I}}))}\\[3mm]
& \hspace*{6cm} \le C(T^*) \|v_1-v_2\|^2_{L^\infty(0,T^*; V_c(\Omega\cup\Omega_{\mathcal{I}}))},
\end{split}
\end{equation*} 
where 
\[C(T^*):=\frac{1}{2\beta C_1} ess \sup_{t\in[0,T^*]}\int_0^t L(s)^2 ds.\]
Taking the limit $T^*\to 0$, we obtain $C(T^*)\to 0$ since

\begin{equation*}
\begin{split}
\int_0^t L(s)^2ds &\le \int_0^{T^*} L(s)^2ds  \le \tilde{C} \int_0^{T^*}(\exp(s)+ s)^2 ds \\
&= \tilde{C}\left(\frac{1}{2}\exp(2T^*)- \frac{1}{2}+\frac{1}{3}T^{*^2} + \exp(T^*)(T^*-1)+1\right)\to 0.
\end{split}
\end{equation*}
Thus, we conclude that there exists a $T^*$ small enough such that
\[\frac{C(T^*)}{\left(\frac{1}{2 C_2}+(1-\frac{\beta}{2 C_1})T^*\right)}<1.\]
Note that in particular
\[\left(\frac{1}{2 C_2}+(1-\frac{\beta}{2 C_1})T^*\right)>0\]
for $T^*\le 1$ by the choice of $\beta$. Hence, we have shown that $\mathcal{S}$ is a contraction on a sufficiently small time interval. According to Banach's fixed point theorem, $\mathcal{S}$ has a unique fixed point on every bounded set. Since the local solution $k$ is independent of the time horizon $T^*$, we can proceed on the interval $[T^*,2T^*]$ using the same arguments as above but with a new initial condition $k(\cdot,T^*)$. After finitely many steps, we can construct a weak solution of (\ref{PIDE_weak}) on the whole time space cylinder after finitely many steps.
Moreover, this solution is unique.
\end{proof}

Now, we have a closer look at the regularity of the weak solution $k$. We start calculating an a priori estimate, which depends only on the initial value condition and the inhomogeneity.

\begin{corollary}\label{a_priori_bounded}
There exists a constant $C_\infty>0$ independent of the data $c$ and $k_0$ such that the weak solution of (\ref{cons_nonlocal}) satisfies the following a priori  estimate
\[\|k\|_{H^1(0,T;V_c(\Omega\cup\Omega_{\mathcal{I}}))} \le C_{\infty}\ \left(\|c\|_{L^2(0,T;L^2(\Omega))} + \|k_0\|_{L^2(\Omega)} +1  \right).\]
\end{corollary}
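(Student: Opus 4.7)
The plan is to obtain two separate estimates and then combine them: an energy estimate on $k$ itself in $L^\infty(0,T;L^2(\Omega))\cap L^2(0,T;V_c(\Omega\cup\Omega_\mathcal{I}))$, and a dual estimate on the time derivative $k_t$ in $L^2(0,T;V_c'(\Omega\cup\Omega_\mathcal{I}))$. The key observation, already used in the proof of Theorem \ref{Existence_semilinear_bounded}, is that $\mathcal{P}(k)$ is uniformly pointwise bounded, so that this nonlinear term contributes only a constant depending on $T$, $M_p$ and $\|A_0\|_{L^\infty(\Omega)}$, which we absorb into the ``$+1$'' on the right-hand side.

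For the first estimate, I test the weak formulation (\ref{PIDE_weak}) with the solution itself, $\varphi=k$, on the time interval $(0,t)$. Using the standard identity $\int_0^t\int_\Omega k_s k\,dxds=\frac{1}{2}\|k(t)\|_{L^2(\Omega)}^2-\frac{1}{2}\|k_0\|_{L^2(\Omega)}^2$ (valid since $k\in L^2(0,T;V_c)$ with $k_t\in L^2(0,T;V_c')$), together with the coercivity $\mathbf{a}(k,k)\ge|||k|||^2$ from Lemma \ref{a_properties}(ii), I obtain
\[
\frac{1}{2}\|k(t)\|_{L^2(\Omega)}^2+\int_0^t|||k(s)|||^2\,ds\le \frac{1}{2}\|k_0\|_{L^2(\Omega)}^2+\int_0^t\!\!\int_\Omega(\mathcal{P}(k)-c)k\,dxds.
\]
The nonlinear term is handled via the pointwise bound $\|\mathcal{P}(k)(\cdot,s)\|_{L^\infty(\Omega)}\le M_p\|A_0\|_{L^\infty(\Omega)}e^{s}$ established in the proof of Theorem \ref{Existence_semilinear_bounded}, which yields $\|\mathcal{P}(k)(\cdot,s)\|_{L^2(\Omega)}\le C_{\mathcal{P}}$ for a constant $C_{\mathcal{P}}=C_{\mathcal{P}}(T,M_p,\|A_0\|_{L^\infty(\Omega)},|\Omega|)$. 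Applying Cauchy--Schwarz and Young's inequality on both the $\mathcal{P}(k)k$ and the $ck$ contribution, combined with the norm equivalence (\ref{norm_equivalnece}) to pass between $\|\cdot\|_{L^2}$ and $|||\cdot|||$, and using the Dirichlet constraint $\|k\|_{L^2(\Omega)}=\|k\|_{L^2(\Omega\cup\Omega_\mathcal{I})}$, I arrive at a differential inequality of the form
\[
\|k(t)\|_{L^2(\Omega)}^2+\int_0^t|||k(s)|||^2\,ds\le \tilde C_1\Bigl(\|k_0\|_{L^2(\Omega)}^2+\|c\|_{L^2(0,t;L^2(\Omega))}^2+1\Bigr)+\tilde C_2\int_0^t\|k(s)\|_{L^2(\Omega)}^2\,ds,
\]
and Grönwall's lemma delivers the $L^\infty(0,T;L^2(\Omega))$ bound; reinserting it in the left-hand integral yields the $L^2(0,T;V_c(\Omega\cup\Omega_\mathcal{I}))$ bound.

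For the time derivative, I exploit the weak formulation by isolating $\int_\Omega k_t\varphi\,dx$ and testing against an arbitrary $\varphi\in V_c(\Omega\cup\Omega_\mathcal{I})$ with $|||\varphi|||\le 1$. Using the continuity of $\mathbf{a}$ from Lemma \ref{a_properties}(i), the duality bound $|\int_\Omega f\varphi\,dx|\le\|f\|_{V_c'}|||\varphi|||$, and the embedding $\|f\|_{V_c'}\le\|f\|_{L^2(\Omega\cup\Omega_\mathcal{I})}$ noted after Proposition \ref{prokernel}, I get the pointwise-in-time estimate
\[
\|k_t(t)\|_{V_c'(\Omega\cup\Omega_\mathcal{I})}\le c_1|||k(t)|||+\|\mathcal{P}(k)(t)\|_{L^2(\Omega)}+\|c(t)\|_{L^2(\Omega)}.
\]
Squaring and integrating in $t\in(0,T)$, the right-hand side is controlled by the already-established $L^2(0,T;V_c)$ bound on $k$, the uniform bound on $\mathcal{P}(k)$, and the data norm $\|c\|_{L^2(0,T;L^2(\Omega))}$. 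Summing this estimate and the energy estimate produces the asserted $H^1(0,T;V_c(\Omega\cup\Omega_\mathcal{I}))$ bound with a constant $C_\infty$ that depends only on $T$, $\delta$, $M_p$, $\|A_0\|_{L^\infty(\Omega)}$, $|\Omega|$, and the equivalence constants $C_1,C_2,c_1$.

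The only mildly delicate step is justifying the integration-by-parts identity for $\int_0^t\langle k_s,k\rangle\,ds$; this is the standard Lions--Magenes result and applies here since $k$ lies in $C(0,T;V_c)\cap H^1(0,T;V_c)$ by Theorem \ref{Existence_semilinear_bounded}. Everything else is a routine chain of Cauchy--Schwarz, Young, norm equivalence, and Grönwall manipulations, where the nonlinear productivity-production term is neutralized by its uniform $L^\infty$ bound.
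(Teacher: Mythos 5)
Your proposal is correct and follows essentially the same route as the paper: test the weak formulation with $k$ itself, use the coercivity of $\mathbf{a}$ and the uniform $L^\infty$ bound on $\mathcal{P}(k)$ to obtain the energy estimate, then bound $k_t$ in $L^2(0,T;V_c'(\Omega\cup\Omega_{\mathcal{I}}))$ by duality from the weak formulation and sum the two. The only cosmetic difference is that you close the energy estimate with Gr\"onwall's lemma, whereas the paper absorbs the lower-order $\|k\|_{L^2}$ contributions directly into the left-hand side by taking the Young constants $\eta_1,\eta_2$ sufficiently large; both devices are standard and yield the same conclusion.
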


In particular, this estimate gives us the continuity of the solution operator 
\[G:L^2(0,T;L^2(\Omega)) \times L^2(\Omega) \to H^1(0,T;V_c(\Omega\cup\Omega_{\mathcal{I}})) \]
that maps any inhomogeneity $c$ and initial condition $k_0$ to the solution of (\ref{cons_nonlocal}) (cf. \citealp[p.112]{troltzsch}).

\begin{proof}
First, we recall that 
\[\|k\|^2_{H^1(0,T;V_c(\Omega\cup\Omega_{\mathcal{I}}))} = \|k\|^2_{L^2(0,T;V_c(\Omega\cup\Omega_{\mathcal{I}}))}+\|k_t\|^2_{L^2(0,T;V'_c(\Omega\cup\Omega_{\mathcal{I}}))}. \]
We estimate the first term exploiting the coercivity of the bilinear form ${\bf a}$. We choose a $t\in[0,T]$ and derive the weak formulation of the capital equation for the test function $k\in H^1(0,T;V_c(\Omega\cup\Omega_{\mathcal{I}}))\cap \mathcal{C}(0,T;V_c(\Omega\cup\Omega_{\mathcal{I}}))$ which yields
\begin{equation*}
\begin{split}
\int_0^t \int_\Omega \frac{\partial k}{\partial t} k\ dxds + \int_0^t {\bf a}(k,k) ds = \int_0^t\int_\Omega (\mathcal{P}(k)-c)k\ dxds.
\end{split}
\end{equation*}
As already proven, we have ${\bf a}(k,k)\ge |||k|||^2$. Hence, we can estimate the left-hand side as

\begin{equation*}
\begin{split}
LHS &= \int_0^t \int_\Omega \frac{\partial k}{\partial t} k\ dxds + \int_0^t {\bf a}(k,k) ds\\
&\ge \int_0^t\int_\Omega\frac{\partial k}{\partial t} \ dxds + \int_0^t |||k(s)|||^2 ds\\
&= \frac{1}{2}\|k(t)\|^2_{L^2(\Omega)} - \frac{1}{2}\|k_0\|^2_{L^2(\Omega)} + \int_0^t |||k(s)|||^2 ds\\
& = \frac{1}{2}\|k(t)\|^2_{L^2(\Omega)} - \frac{1}{2}\|k_0\|^2_{L^2(\Omega)} + \|k\|_{L^2(0,t;V_c(\Omega\cup\Omega_{\mathcal{I}}))}^2 
\end{split}
\end{equation*} 
for all $t\in[0,T]$. In order to derive an upper bound for the right-hand side, we exploit the Lipschitz continuity and the boundedness of the production function $p$, $p(0)=0$, and the boundedness of the fraction
\[\frac{\int_{\Omega\cup\Omega_{\mathcal{I}}} \phi(k(y,t))\Gamma_\mu(x,y)\ dy}{\int_{\Omega\cup\Omega_{\mathcal{I}}} \phi(k(y,t))\Gamma_\varepsilon(x,y)\ dy + \xi}\le 1 \]
for all $x\in\Omega$ and $t\in[0,T]$. With these properties, we get

\begin{equation*}
\begin{split}
RHS &= \int_0^t\int_\Omega (\mathcal{P}(k)-c)k\ dxds \\
&\le \int_0	^t \|\mathcal{P}(k)(s)\|_{L^2(\Omega)}\|k(s)\|_{L^2(\Omega)} ds + \int_0^t \|c(s)\|_{L^2(\Omega)}\|k(s)\|_{L^2(\Omega)} ds\\
& \le \|A_0\|_{L^\infty(\Omega)} \int_0^t \left( \int_\Omega\left| e^sp(k(s))\right|^2\ dx\right)^{\frac{1}{2}} \left( \int_\Omega|k(s)|^2\ dx\right)^{\frac{1}{2}} ds \\
&+ \int_0^t \|c(s)\|_{L^2(\Omega)}\|k(s)\|_{L^2(\Omega)} ds\\
&\le  \|A_0\|_{L^\infty(\Omega)} M_p |\Omega| \left( \int_0^t e^{2s}ds\right)^{\frac{1}{2}} \left( \int_0^t \|k(s)\|^2_{L^2(\Omega)}ds\right)^{\frac{1}{2}} \\
&+ \int_0^t \|c(s)\|_{L^2(\Omega)}\|k(s)\|_{L^2(\Omega)} ds\\
& \le \|A_0\|_{L^\infty(\Omega)} M_p |\Omega| \left( \frac{e^{2t}}{2} - \frac{1}{2} \right)^{\frac{1}{2}}\|k\|_{L^2(0,t;L^2(\Omega))}+ \int_0^t \|c(s)\|_{L^2(\Omega)}\|k(s)\|_{L^2(\Omega)} ds.\\
\end{split}
\end{equation*}
Using Young's inequality for two constants $\eta_1,\eta_2>0$, we have
\begin{equation*}
\begin{split}
RHS&\le \|A_0\|_{L^\infty(\Omega)} M_p|\Omega| \left( \frac{\eta_1}{2} \left( \frac{e^{2t}}{2} - \frac{1}{2}\right) + \frac{1}{2\eta_1}\|k\|_{L^2(0,t;L^2(\Omega))}^2\right) \\
& \quad + \frac{\eta_2}{2}\|c\|^2_{L^2(0,t;L^2(\Omega))} + \frac{2}{\eta_2} \|k\|_{L^2(0,t;L^2(\Omega))}^2\\
&\le \|A_0\|_{L^\infty(\Omega)} M_p|\Omega| \left( \frac{\eta_1}{2} \left( \frac{e^{2t}}{2} - \frac{1}{2}\right) + \frac{1}{2C_1\eta_1}\|k\|_{L^2(0,t;V_c(\Omega\cup\Omega_{\mathcal{I}}))}^2\right) \\
& \quad + \frac{\eta_2}{2}\|c\|^2_{L^2(0,t;L^2(\Omega))} + \frac{1}{2C_1\eta_2}\|k\|_{L^2(0,t;V_c(\Omega\cup\Omega_{\mathcal{I}}))}^2
\end{split}
\end{equation*}
by the equivalence of spaces. Combining both estimates yields
\begin{equation*}
\begin{split}
& \frac{1}{2}\|k(t)\|^2_{L^2(\Omega)} + \|k\|_{L^2(0,t;V_c(\Omega\cup\Omega_{\mathcal{I}}))}^2 \\
&\le  \frac{1}{2}\|k_0\|^2_{L^2(\Omega)} + \|A_0\|_{L^\infty(\Omega)} M_p|\Omega| \left( \frac{\eta_1}{2} \left( \frac{e^{2t}}{2} - \frac{1}{2}\right) + \frac{1}{2C_1\eta_1} \|k\|_{L^2(0,t;V_c(\Omega\cup\Omega_{\mathcal{I}}))}^2\right) \\
& \quad + \frac{\eta_2}{2}\|c\|^2_{L^2(0,t;L^2(\Omega))} + \frac{1}{2C_1\eta_2} \|k\|_{L^2(0,t;V_c(\Omega\cup\Omega_{\mathcal{I}}))}^2.
\end{split}
\end{equation*}
Taking the maximum over all $t\in[0,T]$ finally gives us
\begin{equation*}
\frac{1}{2} \|k\|^2_{L^\infty(0,T;L^2(\Omega))} + \hat{c} \|k\|^2_{L^2(0,T;V_c(\Omega\cup\Omega_{\mathcal{I}}))} \le C\left( \|k_0\|_{L^2(\Omega)} +  \|c\|_{L^2(0,T;L^2(\Omega))} + 1\right)^2
\end{equation*}
with 
\[\hat{c}:=1 - \frac{ \|A_0\|_{L^\infty(\Omega)} M_p|\Omega|}{2C_1\eta_1} - \frac{1}{2C_1\eta_2}>0\]
for $\eta_1,\eta_2>0$ sufficiently large.\\
In order to estimate the second term, we define some linear functionals analogously to \citet[pp.119]{troltzsch}, namely

\begin{equation*}
\begin{split}
&F_1(t):v\mapsto \langle k(t),v\rangle_{V(\Omega\cup\Omega_{\mathcal{I}})} \hspace*{2cm} F_2(t): v\mapsto \langle \delta k(t),v\rangle_{L^2(\Omega)}\\
&F_3(t):v\mapsto \langle \mathcal{P}(k)(t),v\rangle_{L^2(\Omega)}\hspace*{2cm} F_4(t):v\mapsto \langle c(t),v\rangle_{L^2(\Omega)}
\end{split}
\end{equation*}
These functionals are continuous since
\begin{equation*}
\begin{split}
&|F_1(t)v| \le |||k(t)|||\ |||v|||\quad \mbox{ and }\quad |F_2(t)v| \le \delta |||k(t)|||\ |||v|||
\end{split}
\end{equation*}
using the Cauchy-Schwartz inequality. 
For the third and fourth functional we get
\[|F_3(t)v|\le \mathrm{const}(t) |||k(t)|||\ |||v|||\quad \mbox{ and }\quad |F_4(t)v|\le \|c(t)\|_{L^2(\Omega)}|||v|||\]
using the estimates of the proof of Lemma \ref{a_properties}. Here we denote by $\mathrm{const}(t)$ a constant depending only on $t$. For fixed $k$ and $c$, we can interpret the values $ |||k(t)|||$ and $\|c(t)\|_{L^2(\Omega)}$ as constants of the definition of the continuity of $F_i,\ i=1,...,4$. According to \citet[p.120]{troltzsch}, we can find a constant $\hat{c}$ such that
\[\|F_i(t)\|_{V_c(\Omega\cup\Omega_{\mathcal{I}})'} \le \hat{c} |||k(t)|||,\ i=1,2,3 \quad \mbox{ and } \|F_4(t)\|_{V_c'(\Omega\cup\Omega_{\mathcal{I}})} \le \hat{c}\|c(t)\|_{L^2(\Omega)}.\]

From the weak formulation, we know that
\[\|k_t\|^2_{L^2(0,T;V_c'(\Omega\cup\Omega_{\mathcal{I}}))} \le \sum_{i=1}^4 \|F_i\|_{L^2(0,T;V_c'(\Omega\cup\Omega_{\mathcal{I}}))}.\]
Using the estimation for $k$,
\[\|k\|^2_{L^2(0,T;V_c(\Omega\cup\Omega_{\mathcal{I}}))} \le C\left( \|k_0\|_{L^2(\Omega)} +  \|c\|_{L^2(0,T;L^2(\Omega))} + 1\right)^2,\] 
we have 
\[\|k_t\|^2_{L^2(0,T;V_c'(\Omega\cup\Omega_{\mathcal{I}}))} \le \tilde{C}\left( \|k_0\|_{L^2(\Omega)} +  \|c\|_{L^2(0,T;L^2(\Omega))} + 1\right)^2.\]
Summing up both estimates, we finally achieve
\[ \|k\|^2_{L^2(0,T;V_c(\Omega\cup\Omega_{\mathcal{I}}))} + \|k_t\|^2_{L^2(0,T;V_c'(\Omega\cup\Omega_{\mathcal{I}}))} \le C^2_\infty \left( \|k_0\|_{L^2(\Omega)} +  \|c\|_{L^2(0,T;L^2(\Omega))} + 1\right)^2 \]
which completes the proof.
\end{proof}

So far, we have only considered the initial data and the right-hand side of the PIDE to be $L^2(\Omega\times[0,T])$ functions. The highest regularity, we can achieve in that case, is $\mathcal{C}(0,T;L^2(\Omega\cup \Omega_{\mathcal{I}}))$. We cannot expect a higher regularity in the space direction, since there is no operator, such as the differential operator, that drives regularity. Nevertheless, we would expect a higher regularity of the weak solution, whenever we choose a higher regularity for the data. The following theorem shows, that the weak solution of the nonlocal capital accumulation has indeed the same regularity as the data.

\begin{theorem}\label{LinftyOmega}
Let all assumptions of Theorem \ref{Existence_semilinear_bounded} hold, and let $k_0\in L^\infty(\Omega)$ and $c\in L^\infty(\Omega\times[0,T])$. Then, the weak solution of the capital accumulation equation (\ref{cons_nonlocal}) is $\mathcal{C}(0,T;V_c(\Omega\cup\Omega_{\mathcal{I}}))\cap H^1(0,T;L^\infty(\Omega))$.
\end{theorem}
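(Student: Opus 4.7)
The continuity component $k\in\mathcal{C}(0,T;V_c(\Omega\cup\Omega_{\mathcal{I}}))$ has already been delivered by Theorem \ref{Existence_semilinear_bounded}, so the task reduces to establishing $k\in H^1(0,T;L^\infty(\Omega))$. Since $[0,T]$ is a bounded interval, I would aim for the stronger pointwise statement $k,\,\partial_t k\in L^\infty(\Omega\times[0,T])$ and conclude via the embedding $L^\infty([0,T])\hookrightarrow L^2([0,T])$. The guiding observation is that although $\mathcal{NL}_\varepsilon$ does not regularise like a local Laplacian, it also does not destroy regularity. Writing
\[\mathcal{NL}_\varepsilon(k)(x,t) = \int_{\Omega\cup\Omega_{\mathcal{I}}}\Gamma_\varepsilon(x,y)\,k(y,t)\,dy - m(x)\,k(x,t),\qquad m(x):=\int_{\Omega\cup\Omega_{\mathcal{I}}}\Gamma_\varepsilon(x,y)\,dy,\]
and noting that $m\in L^\infty(\Omega)$ (immediate either from the truncated Gaussian form of $\Gamma_\varepsilon$ or from Cauchy--Schwarz together with property (5) of Proposition \ref{prokernel}), the state equation becomes, for a.e.\ $x\in\Omega$, the affine linear scalar ODE in $t$
\[\partial_t k(x,t) + \bigl(\beta m(x)+\delta\bigr)k(x,t) = g(x,t),\]
with $g(x,t):=\beta\int_{\Omega\cup\Omega_{\mathcal{I}}}\Gamma_\varepsilon(x,y)k(y,t)\,dy + \mathcal{P}(k)(x,t) - c(x,t)$.

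The central intermediate step is to show $g\in L^\infty(\Omega\times[0,T])$. For the convolution term, Cauchy--Schwarz and property (5) of Proposition \ref{prokernel} give the bound $\gamma_2\|k(\cdot,t)\|_{L^2(\Omega\cup\Omega_{\mathcal{I}})}$, and this is uniformly bounded on $[0,T]$ because Corollary \ref{a_priori_bounded} together with the standard embedding $H^1(0,T;V_c(\Omega\cup\Omega_{\mathcal{I}}))\hookrightarrow \mathcal{C}(0,T;L^2(\Omega\cup\Omega_{\mathcal{I}}))$ controls $\|k(\cdot,t)\|_{L^2}$. The productivity-production term satisfies $|\mathcal{P}(k)(x,t)|\le\|A_0\|_{L^\infty(\Omega)}M_p\,e^T$ by $p\le M_p$ and the bound $\le 1$ on the exponential fraction already exploited in the proof of Theorem \ref{Existence_semilinear_bounded}. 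The last term lies in $L^\infty$ by hypothesis.

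With $g$ bounded, I would apply variation of parameters pointwise in $x$ to represent
\[k(x,t) = e^{-(\beta m(x)+\delta)t}\,k_0(x) + \int_0^t e^{-(\beta m(x)+\delta)(t-s)}\,g(x,s)\,ds,\]
and, exploiting $\beta m+\delta\ge 0$, take essential supremum in $x$ to obtain $\|k\|_{L^\infty(\Omega\times[0,T])}\le\|k_0\|_{L^\infty(\Omega)}+T\,\|g\|_{L^\infty(\Omega\times[0,T])}<\infty$. Once $k\in L^\infty$ is available, the same decomposition yields $\mathcal{NL}_\varepsilon(k)\in L^\infty(\Omega\times[0,T])$, and reading the equation once more as $\partial_t k = \beta\mathcal{NL}_\varepsilon(k)-\delta k+\mathcal{P}(k)-c$ shows that $\partial_t k$ lies in $L^\infty(\Omega\times[0,T])$ as well. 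Combined with the boundedness of $[0,T]$, this is precisely $k\in H^1(0,T;L^\infty(\Omega))$.

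The step I expect to require the most care is the passage from the weak formulation, which only asserts the equation in $V_c'$, to the pointwise-in-$x$ ODE needed for the Duhamel argument. The remedy is that once $g$ has been shown to lie in $L^2(\Omega\times[0,T])$, the identity $\partial_t k = g-(\beta m+\delta)k$ already holds as an equality in $L^2(\Omega\times[0,T])$, hence a.e.\ in $(x,t)$; testing against indicator functions of measurable subsets of $\Omega$ and using Fubini formalises this, after which the variation-of-parameters representation is simply one-dimensional linear ODE theory applied $x$ by $x$.
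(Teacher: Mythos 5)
Your argument follows essentially the same route as the paper's proof: rewrite $\mathcal{NL}_\varepsilon(k)$ as a convolution term minus multiplication by $m(x)=\int_{\Omega\cup\Omega_{\mathcal{I}}}\Gamma_\varepsilon(x,y)\,dy$, freeze $x$ to obtain a scalar linear ODE in $t$ with bounded right-hand side $g$, and conclude via the variation-of-parameters representation together with uniqueness of the weak solution. If anything your proposal is slightly more complete, since you make explicit two points the paper glosses over: the uniform-in-$(x,t)$ bound on the convolution term (via Cauchy--Schwarz, property (5) of Proposition \ref{prokernel}, and the a priori estimate) rather than mere finiteness for each fixed $x$, and the final observation that $\partial_t k\in L^\infty(\Omega\times[0,T])$, which is what actually places $k$ in $H^1(0,T;L^\infty(\Omega))$ as claimed.
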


\begin{remark}\label{Vinfty}
By the intersection $\mathcal{C}(0,T;V_c(\Omega\cup\Omega_{\mathcal{I}}))\cap H^1(0,T;L^\infty(\Omega))$, we mean a subspace of the $\mathcal{C}([0,T];V_c(\Omega\cup\Omega_{\mathcal{I}}))$ space. We define the Banach space
\begin{equation*}
\mathcal{V}^\infty:=\{u\in \mathcal{C}([0,T];V_c(\Omega\cup\Omega_{\mathcal{I}})):\ ess\sup_{(x,t)\in\Omega\times(0,T)}|u(x,t)|<\infty\}
\end{equation*}
endowed with the norm
\[\|u\|_{\mathcal{V}^\infty}:=\|u\|_{\mathcal{C}([0,T];V_c(\Omega\cup\Omega_{\mathcal{I}}))} + \|u\|_{L^\infty(\Omega\times(0,T))}\]
and refer to $\mathcal{V}^\infty$ whenever we consider the intersection space. 
\end{remark}

\begin{proof}
Consider the solution $k^*\in \mathcal{C}([0,T];V_c(\Omega\cup\Omega_{\mathcal{I}}))$ of the capital accumulation equation. For such $k^*$, the production-productivity operator $\mathcal{P}$ maps to $L^\infty$ since we have assumed $A_0$ to be a $L^\infty(\Omega)$ function and the production function $p$ to be bounded. We can calculate 
\[\|\mathcal{P}(k^*)\|_{L^\infty(\Omega\times[0,T])} = \|A_0\|_{L^\infty(\Omega)}e^T \ M_p,\]
where $M_p$ denotes the uniform upper bound of $p$. Moreover, we know that 
\[\int_{\Omega\cup\Omega_{\mathcal{I}}} \Gamma_\varepsilon(x,y)\ dy =: \hat{\Gamma}_\varepsilon(x)\le 1\]
and using H\"older's inequality, it follows that
\[\int_{\Omega\cup\Omega_{\mathcal{I}}} k^*(y,t)\Gamma_\varepsilon(x,y)\ dy < \infty \]
for all $x\in\Omega$. For a fixed $x\in\Omega$, we consider the capital accumulation equation
\begin{equation*}
\begin{split}
\frac{\partial k}{\partial t}(x,t)-\mathcal{NL}(k)(x,t) + \delta k(x,t) -\mathcal{P}(k)(x,t) & = -c(x,t)\hspace*{0.8cm} \mbox{ on }(0,T)\\
k(x,0) &= k_0(x).
\end{split}
\end{equation*}
We rewrite the equation as
\[\frac{\partial k}{\partial t}(x,t) + (\hat{\Gamma}_\varepsilon(x) + \delta) k(x,t) = \int_{\Omega\cup \Omega_{\mathcal{I}}}k(y,t)\Gamma_\varepsilon(x,y)\ dy + \mathcal{P}(k)(x,t)-c(x,t) \mbox{ on }(0,T).\]
We neglect the dependence of $k$ of the right-hand side, since it maps every $k$ to $L^\infty$, and define
\[g_x(t):= \int_{\Omega\cup \Omega_{\mathcal{I}}}k(y,t)\Gamma_\varepsilon(x,y)\ dy + \mathcal{P}(k)(x,t)-c(x,t) \in L^\infty(\Omega\times[0,T]).\] 
Note, that the regularity of $g_x$ is determined by the regularity of $c$. 
Now, we consider the inhomogeneous linear ordinary differential equation
\begin{equation*}
\frac{\partial k_x}{\partial t} + (\hat{\Gamma}_{x,\varepsilon} +\delta)\ k = g_x
\end{equation*} 
depending on the parameter $x$. We know that $g_x$ is continuous in $t$. Hence, the equation has a solution $\overline{k}$ given as
\[\overline{k}_x(t) = e^{-t(\hat{\Gamma}_{x,\varepsilon} +\delta)}\left( k_{0,x} + \int_0^t g_x(s)e^{s(\hat{\Gamma}_{x,\varepsilon} +\delta)}ds\right)\]
which is bounded for every $x\in\Omega$. Thus, we conclude
\[\overline{k}\in L^\infty(\Omega\times[0,T]).\]
Since the solution of the capital accumulation equation is unique we get  $k^*=\overline{k}$, which ends the proof.
\end{proof}

\bibliography{literature}

\begin{thebibliography}{}

\bibitem[Acemoglu, 2009]{acemoglu}
Acemoglu, D. (2009).
\newblock {\em Introduction to Modern Economic Growth}.
\newblock Princton University Press.

\bibitem[Aldashev et~al., 2014]{aldashev}
Aldashev, G., Aldashev, S., and Carletti, T. (2014).
\newblock On convergence in the spatial {AK} growth models.
\newblock {\em Working Paper, arXiv preprint arXiv:1401.4887}.

\bibitem[Ani{\c{t}}a et~al., 2013]{anita}
Ani{\c{t}}a, S., Capasso, V., Kunze, H., and La~Torre, D. (2013).
\newblock Optimal control and long-run dynamics for a spatial economic growth
  model with physical capital accumulation and pollution diffusion.
\newblock {\em Applied Mathematics Letters}, 26(8):908--912.

\bibitem[Barro, 1990]{barro90}
Barro, R.~J. (1990).
\newblock Government spending in a simple model of endogeneous growth.
\newblock {\em Journal of political economy}, 98(5, Part 2):S103--S125.

\bibitem[Boucekkine et~al., 2013]{boucekkine13}
Boucekkine, R., Camacho, C., and Fabbri, G. (2013).
\newblock Spatial dynamics and convergence: The spatial {AK} model.
\newblock {\em Journal of Economic Theory}, 148:2719--2736.

\bibitem[Brito, 2001]{brito01}
Brito, P. (2001).
\newblock A {B}entham-{R}amsey model for spatially heterogeneous growth.
\newblock {\em Working Papers Depeartment of Economics}.
\newblock ISEG, University of Lisboa.

\bibitem[Brock et~al., 2014]{brock}
Brock, W., Xepapadeas, A., and Yannacopoulos, A.~N. (2014).
\newblock Optimal agglomerations in dynamic economics.
\newblock {\em Journal of Mathematical Economics}, 53(C):1--15.

\bibitem[Brock et~al., 2013]{brock2013}
Brock, W.~A., Xepapadeas, A., and Yannacopoulos, A. (2013).
\newblock Adjustment costs and long run spatial agglomerations.
\newblock {\em FEEM Working Paper}.
\newblock Available at SSRN:
  https://www.econstor.eu/bitstream/10419/89610/1/NDL2013-068.pdf.

\bibitem[Cass, 1965]{cass}
Cass, D. (1965).
\newblock Optimum growth in an aggregative model of capital accumulation.
\newblock {\em The Review of Economic Studies}, 32(3):233--240.

\bibitem[D{'}Elia et~al., 2014]{delia2}
D{'}Elia, M., Du, Q., Gunzburger, M., and Lehoucq, R. (2014).
\newblock Finite range jump processes and volume-constrained diffusion
  problems.
\newblock {\em Sandia National Labs SAND 2014-2584J}.

\bibitem[D{'}Elia and Gunzburger, 2014]{delia}
D{'}Elia, M. and Gunzburger, M. (2014).
\newblock Optimal distributed control of nonlocal steady diffusion problems.
\newblock {\em SIAM Journal on Control and Optimization}, 52(1):243--273.

\bibitem[Du et~al., 2012a]{du_ana}
Du, Q., Gunzburger, M., Lehoucq, R., and Zhou, K. (2012a).
\newblock Analysis and approximation of nonlocal diffusion problems with volume
  constraints.
\newblock {\em SIAM Review}, 54(4):667--696.

\bibitem[Du et~al., 2012b]{du1}
Du, Q., Gunzburger, M., Lehoucq, R., and Zhou, K. (2012b).
\newblock A nonlocal vector calculus, nonlocal volume-constrained problems, and
  nonlocal balance laws.
\newblock {\em Sandia National Labs SAND 2010-8353J}.

\bibitem[Du et~al., 2014]{du}
Du, Q., Huang, Z., and Lehoucq, R. (2014).
\newblock Nonlocal convection-diffusion volume-constrained problems and jump
  processes.
\newblock {\em Discrete and Continuous Dynamical Systems Series B},
  19(4):961--977.

\bibitem[Elstrodt, 2005]{elstrodt}
Elstrodt, J. (2005).
\newblock {\em Ma{\ss}- und Integrationstheorie}, volume~4.
\newblock Spinger.

\bibitem[Gunzburger and Lehoucq, 2010]{gunzburger}
Gunzburger, M. and Lehoucq, R.~B. (2010).
\newblock A nonlocal vector calculus with application to nonlocal boundary
  value problems.
\newblock {\em Multiscale Modeling \& Simulation}, 8(5):1581--1598.

\bibitem[Koopmans, 1965]{koopmans}
Koopmans, T. (1965).
\newblock On the concept of optimal economic growth.

\bibitem[Krugman, 1991]{krugman1}
Krugman, P. (1991).
\newblock {\em Geography and Trade}.
\newblock MIT Press, Cambridge.

\bibitem[Sorger, 2002]{sorger}
Sorger, G. (2002).
\newblock On the long-run distribution of capital in the {R}amsey model.
\newblock {\em Journal of Economic Theory}, 105:226--243.

\bibitem[Tr{\"o}ltzsch, 2005]{troltzsch}
Tr{\"o}ltzsch, F. (2005).
\newblock {\em Optimale Steuerung partieller Differentialgleichungen: Theorie,
  Verfahren und Anwendungen}.
\newblock Vieweg+Teubner Verlag.

\bibitem[Wloka, 1982]{wloka}
Wloka, J. (1982).
\newblock {\em Partielle Differentialgleichungen: Sobolevr{\"a}ume und
  Randwertaufgaben}.
\newblock Mathematische Leitf{\"a}den. Teubner.

\end{thebibliography}
\end{document}